\newtheorem{Lemma}{Lemma}[section]
\newtheorem{prop}[Lemma]{Proposition}
\newtheorem{rem}[Lemma]{Remark}
\theoremstyle{definition}
\newtheorem*{MT}{Main Theorem}
\numberwithin{equation}{section}
\numberwithin{Lemma}{section}
\def\R{{\mathbb R}}
\newcommand{\ep}{\epsilon}
\newcommand{\p}{\partial}
\newcommand{\om}{\omega}
\begin{document}

\title[Small Periodic Solution]{Small Amplitude Periodic Solutions of Klein-Gordon Equations}


%

\author[Lu]{Nan Lu}
\address{Department of Mathematics \\
Lehigh University\\ 
14 East Packer Avenue, Bethlehem, PA, 18015}
\email{nal314@lehigh.edu}
\maketitle

\begin{abstract}
We consider a class of nonlinear Klein-Gordon equations
$u_{tt}=u_{xx}-u+f(u)$ and obtain a family of small amplitude periodic solutions, where the temporal and spatial period have different scales. The proof is based on a combination of Lyapunov-Schmidt reduction, averaging and Nash-Moser iteration.
\end{abstract}

\section{Introduction}\label{In}
The nonlinear Klein-Gordon equation
\begin{equation}\label{eq1.1}
u_{tt}=u_{xx}-u+f(u)\ , \ x\in\R,
\end{equation}
is an important model in particle physics, which models the field equation for spineless particles. Classical examples include Sine-Gordon equation and $\phi^4$-model. The main result of this paper is to construct a family of small amplitude periodic (both in time and space) solutions of \eqref{eq1.1}, where the temporal and spatial period have different scales. Moreover, we can approximate such periodic solutions by a simple periodic orbit for a planar system up to exponentially small errors. We will postpone the precise statement until the end of Section \ref{SM} after we introduce some mathematical notations. Throughout this paper, we will assume the nonlinear term $f$ to be analytic and odd in $u$. The analyticity is crucial for us to prove the exponentially small error. The oddness is assumed just for convenience. We will comment on how to deal with general $f$ containing quadratic terms later in this section.

The motivation of this paper originates from the sine-Gordon equation ($f(u)=u-\sin u$)
\begin{equation}\label{eq1.1.2}
u_{tt}=u_{xx}-\sin u,
\end{equation}
which has a family of time periodic solutions (breathers)
\begin{equation}\label{eq1.1.3}
u(x,t)=4\arctan{\frac{\sqrt{1-\om^2}\sin{\om t}}{\om\cosh{\sqrt{1-\om^2}x}}}.
\end{equation}
Clearly, the above formula is only defined for $|\om|<1$. Since \eqref{eq1.1} can be viewed as a perturbation of \eqref{eq1.1.2} for small amplitude solutions, it is natural to ask if \eqref{eq1.1} admits any time periodic solution parameterized by $\om$. The author studied the problem for $\om=\sqrt{1-\ep^2}$ in \cite{Lu13}, where he obtained small amplitude (of order $\ep$) breather solutions with $O(e^{-\frac c\ep})$ tails as $|x|\to\infty$, i.e., the solution is $\frac{2\pi}{\om}$ periodic in time and almost localized in space with exponentially small errors. In this manuscript, we continue our study for $\om=\sqrt{1+\ep^2}$. It turns out the solutions we obtain here have completely different behavior in spatial variable, namely, the solution is also periodic in $x$. 

Since the temporal period is explicitly known, we use the spatial dynamics method (interchanging $x$ and $t$) to rewrite \eqref{eq1.1} as a nonlinear wave equation with periodic boundary condition
\begin{equation}\label{eq1.1.4}
u_{tt}=u_{xx}+u-f(u)\ , \ u(x,t)=u(x+\frac{2\pi}{\om},t),
\end{equation}
where $\om=\sqrt{1+\ep^2}$. By normalizing the spatial period (temporal period for \eqref{eq1.1}) to be $2\pi$, i.e. rescale $x$ to $\om x$, we further transform \eqref{eq1.1.4} to
\begin{equation}\label{eq1.1.5}
u_{tt}=\om^2u_{xx}+u-f(u)\ , \ u(x,t)=u(x+2\pi,t).
\end{equation}
Since the nonlinearity $f$ is odd in $u$, it suffices to restrict $u$ to be odd in $x$. Consequently, the linear operator $\om^2\p_{xx}+1$
has characteristic frequencies $\pm\ep i$ and $\pm\sqrt{\om^2k^2-1}i$ for $k\geq 2$ with multiplicity $1$.


The strategy for finding periodic solutions of \eqref{eq1.1} (\eqref{eq1.1.5} under spatial dynamics formulation) is a combination of singular perturbation theory, averaging, Lyapunov-Schmidt reduction and Nash-Moser iteration. First of all, we observe that the characteristic frequencies of the linear operator $\om^2\p_{xx}+1$ have two scales, namely, one pair of $O(\ep)$-eigenvalues and infinitely many pairs of $O(1)$-eigenvalues. To obtain uniform knowledge in $\ep$, we rescale time in \eqref{eq1.1.5} to blow up small eigenvalues from $O(\ep)$ to $O(1)$, which makes the $O(1)$-eigenvalues become $O(\frac 1\ep)$. With appropriate spatial rescaling, we obtain a singularly perturbed system \eqref{eq2.3} and \eqref{eq2.4}. The singular limit of such system can be rigorously justified as a second order ordinary differential equation \eqref{eq2.12} whose phase plane contains a lot of periodic orbits. Secondly, we perform a sequence of partial normal form transformations to obtain a system whose solutions are exponentially close to the limit equation. Finally, we follow the Lyapunov-Schmidt type argument in \cite{SZ02} and the Nash-Moser iteration in \cite{BB10} to find our periodic solutions near those unperturbed ones. 

The problem of finding periodic solutions to Hamiltonian PDEs has been extensively studied
since the 1960s, see for example \cite{BB03,BB10,H66,K79,R67,R68,R71,R78} and references therein. The first breakthrough on this problem was due to Rabinowitz \cite{R67}. He rephrased the problem as a variational problem and proved the existence of periodic solutions under the monotonicity assumption on the nonlinearity whenever the time period was a rational
multiple of the length of the spatial interval. Subsequently, many authors, such
as Br\' ezis, Coron, Nirenberg etc., have used and developed Rabinowitz’s
variational methods to obtain related results, see \cite{B83,BC81,BN78}. In these papers, the time
period $T$ is required to be a rational multiple of $\pi$. The case in which $T$
is some irrational multiple of $\pi$ has been investigated by
Fe\v{c}kan \cite{F95} and McKenna \cite{M85}. At the end of the 1980s, a different approach
which used the Kolmogorov-Arnold-Moser (KAM) theory was developed from the viewpoint of infinite dimensional dynamical systems by Kuksin \cite{K87} and Wayne \cite{E90}. This method allows one to obtain solutions whose periods are irrational multiples of the length of the spatial interval, and it can also be easily extended to construct quasi-periodic solutions see \cite{P97,GY07,Y07} and references therein. Unlike the variational techniques, the KAM theory only yields solutions of small amplitude. Later, in the original work of Craig-Wayne \cite{CW93}, the existence of periodic solutions for the one-dimensional conservative nonlinear wave equation was also proved by using the Lyapunov-Schmidt method and Newton iterations. For exponential stability of periodic solutions, we refer readers to Bambusi-Nekhoroshev \cite{BN88} and Paleari-Bambusi-Cacciatori \cite{PBC01} and references therein.

The methodology employed in this paper is based on a perturbation argument, which is different from the variational technique and the KAM theory. Even though our solutions still have small amplitudes, which is due to scaling, we actually obtain them from some unperturbed periodic orbits which have large amplitudes. The central idea of KAM theory is to use successive approximate solutions (obtained by normal form transformations) with better accuracy to obtain the exact solution. This method usually requires the analyticity of nonlinearity to assure the convergence of the normal form transformations. Here we assume the nonlinear term to be analytic is to obtain the exponentially small error between solutions of \eqref{eq1.1} and some explicitly given unperturbed periodic orbits. In fact, assuming $f\in C^5$ is enough for the existence part. Although we assume $f$ to odd in $u$, we can still deal with $f$ containing quadratic terms. In that case, one needs to work on even function space, which makes the linear operator $\om^2\p_{xx}+1$ to have an additional characteristic frequencies at $\pm1$. Then one can perform a center manifold reduction to get rid of those frequencies, which reduces the problem to the case in this paper. By using the same method, we can also deal with $\om=\frac{1}{k}\sqrt{1+\ep^2}$ for any $k\geq1$, in which case a finite number of hyperbolic eigenvalues appear. The small divisor problem appears in this context which prevents us to prove the result for all small $\ep$ but rather a subset with almost full measure. We mention that our proof does not rely on the Hamiltonian structure of the problem that much. The only property we use is the existence of some invariant quantity. Some related problems had been discussed by P\" oschel \cite{P97}, Berti-Bolle \cite{BB10} and Berti-Bolle-Procesi \cite{BBP10}. In those cases, the characteristic frequencies are uniformly away from $0$, which makes our problem different from theirs. As we mentioned earlier, the idea of the proof is similar to the one in \cite{SZ02}. Although we have simple geometry on the target space (flat), the technical analysis for finding periodic solutions corresponding to $O(\frac 1\ep)$ frequencies is harder due to the infinite dimensional nature of our problem. We overcome this difficulty by a Nash-Moser interation argument, which is adopted from \cite{BB,BB10} with certain modification. Finally, we point out that our method can be applied to non-Hamiltonian (possibly with non-analytic nonlinearity) systems. Meanwhile, the method gives additional information for the perturbed periodic orbits. 

The rest of the paper is organized as follows. In Section \ref{SM}, we set up our problem in a suitable form and state our main result. In Section \ref{P}, we present the partial normal form transformations. In Section \ref{proof}, we prove the main theorem.

\section{Set Up and Main Result}\label{SM}
In this section, we introduce some notations that will be used throughout this paper. Then we perform scaling transformations to rewrite \eqref{eq1.1.5} in a suitable form. Finally, we state the main results.

The function space we will be working on is the standard Sobolev space consisting of odd periodic functions, namely,
\[H^k\triangleq\{h=\sum_{j=1}^\infty a_j\sin{(j x)}\Big| \sum_{j=1}^{\infty}(1+j^2)^ka_j^2<\infty\}\ ,\ H^0=L^2\]
with norm
\[\|h\|_k\triangleq\|h\|_{H^k}=(\sum_{j=1}^{\infty}(1+j^2)^ka_j^2)^{\frac 12}.\]
Given any $g\in H^k$, we define $Pg$ and $Qg$ as
\[Pg\triangleq\frac{1}{\pi}\int_{-\pi}^{\pi}g(x)\sin x\ dx\ ,\ Q g\triangleq g-(\sin x)Pg,\]
i.e., $P$ is the projection onto $\{\R\sin x\}$ and $Q$ is the orthogonal complement of $P$.

We will use $C,C'$ to denote generic constants which may have different values in different places. However, all of them are independent of the perturbation parameter $\ep$. The norm of an element in a Banach space $X$ will be donated as $\|\cdot\|_X$. A ball centered at the origin with radius in a Banach space $X$ is denoted by $B_r(0,X)$.

We recall that for the nonlinear term $f$, we assume
\begin{enumerate}
\item[(A)] $f$ is odd and holomorphic in $u$. Moreover,
$f'(0)=0,f^{\prime\prime\prime}(0)\ne0$.
\end{enumerate}
Consequently, $Pf$ and $Qf$ are analytic functions of their arguments.
\begin{rem}
The assumption $f^{\prime\prime\prime}(0)\ne0$ is not essential and it can begin with 5th or any other higher order terms. We stick to this case in order to include classical examples like sine-Gordon equation and the $\phi^4$ model.
\end{rem}

As we discussed above, the characteristic frequencies of $\om^2\p_{xx}+1$ have two different scales, namely, $O(\ep)$ and $O(1)$. To separate the small one from others, we write
\begin{equation}\label{eq2.1}
u(x,t)=\tilde v(t)\sin{x}+\tilde w(x,t)\triangleq(Pu)\sin x+Q u(x,t),
\end{equation}
where
$\int_{-\pi}^{\pi} \tilde w(x,t)\sin{x}\ dx=0$. Recall that $\om=\sqrt{1+\ep^2}$, we let
\begin{equation}\label{eq2.2}
\tau=\ep\om t\ ,\
 v(\tau)=\frac{\tilde v(\ep^{-1}\om^{-1}\tau)}{\ep}\ ,\
w(x,\tau)=\frac{\tilde w(x,\ep^{-1}\om^{-1}\tau)}{\ep}.
\end{equation}
The rescaling for temporal variable is to blow up the $O(\ep)$ frequencies to $O(1)$. The rescaling for $u$ is the normalize the amplitude of the solution. Plugging the decomposition and rescaling into \eqref{eq1.1.5}, we obtain
\begin{eqnarray}\label{eq2.3}
 v_{\tau\tau}&=&-\frac{1}{\om^2}v-\frac{1}{\ep^3\om^2}Pf(\ep v\sin x+\ep w),\\\label{eq2.4}
 w_{\tau\tau}&=&\frac{\p_{xx}+\frac{1}{\om^2}}{\ep^2}w-\frac{1}{\ep^3\om^2}Q f(\ep v\sin x+\ep w)\ , \ w(x,\tau)=w(x+2\pi,\tau).
\end{eqnarray}
By the analyticity and leading order term assumption of $f$, it is easy to see $Pf$ and $Qf$ are analytic in $(\ep,v,w)$. The system \eqref{eq2.3} and \eqref{eq2.4} has an invariant quantity (Hamitonian)
\begin{equation}\label{eq2.7}
\begin{aligned}&H(v,v_\tau,w,w_\tau,\ep)\\
=&\frac {v_\tau^2}{2}+\frac{v^2}{2\om^2}+\frac{1}{\ep^4\om^2}PF(\ep v\sin x+\ep w)\\
&+\int_{-\pi}^{\pi}\frac 12 w_\tau^2+\frac{1}{2\ep^2}(w_x^2-\frac{1}{\om^2}w^2)+\frac{1}{\ep^4\om^2}Q F(\ep v\sin x+\ep w)\ dx,
\end{aligned}
\end{equation}
where $F$ is the anti-derivative of $f$ with $F(0)=0$. 

To simplify our notation, we let
\begin{equation}\label{eq2.8}
J_\ep\triangleq \frac {1}{\om^2}+\p_{xx},
\end{equation}
and
\begin{equation}\label{2.8a}
 \tilde f(v,w,\ep)\triangleq \frac{-1}{\ep^2\om^2}Pf(\ep v\sin x+\ep w)\ , \ \tilde g(v,w,\ep)\triangleq \frac{-1}{\ep^2\om^2}Qf(\ep v\sin x+\ep w).
\end{equation}
It is straight forward to verify that for small $\ep$,
\begin{equation}\label{eq2.9}
J_\ep^{-1}\in L(QH^s,QH^{s+2})\ ,\ \|J_\ep^{-1}\|_{L(QH^s,QH^{s+2})}\leq2.
\end{equation}
Using above notations, we can rewrite \eqref{eq2.3} and \eqref{eq2.4} as
\begin{eqnarray}\label{eq2.10}
&& v_{\tau\tau}=-\frac{1}{\om^2}v+\tilde f(u,v,\ep),\\\label{eq2.11}
&& w_{\tau\tau}=\frac{J_\ep}{\ep^2}w+\tilde g(v,w,\ep).
\end{eqnarray}
The system consists of \eqref{eq2.10} and \eqref{eq2.11} is singularly perturbed, where the singular motion corresponds to fast oscillation in the normal direction. The singular limit is formally given by taking the first equation with $w=0$ and $\ep=0$, namely,
\begin{equation}\label{eq2.12}
p_{\tau\tau}=-p-\frac {1}{8} f^{\prime\prime\prime}(0)p^3.
\end{equation}
Here the nonlinear term $\frac {1}{8} f^{\prime\prime\prime}(0)p^3$ is obtained by Taylor's expansion. It turns out this formal limiting system can be rigorously justified by the using the Duhamel's principle and Gronwall's inequality\footnote{Rewrite \eqref{eq2.10}, \eqref{eq2.11} and \eqref{eq2.12} as first order systems}. We note that \eqref{eq2.12} also has an invariant quantity
\begin{equation}\label{eq2.13}
H_\star(p,p_\tau)=\frac 12 p_\tau^2+\frac 12 p^2+\frac{1}{32}f^{\prime\prime\prime}(0)p^4.
\end{equation}
It is clear that if $f^{\prime\prime\prime}(0)>0$, every solution of \eqref{eq2.12} is a periodic orbit. If $f^{\prime\prime\prime}(0)<0$, solutions in a small neighborhood of the origin are periodic orbits. Let $p(\tau)$ be a periodic orbit of \eqref{eq2.12}. We say $p(\tau)$ is non-degenerate if
$\dot p(\tau)$ is the only solution of the linearized equation of \eqref{eq2.12}. If we rewrite \eqref{eq2.12} as a first order system for $(p,p_\tau)$, namely,
\begin{equation}\label{eq2.13a}
\begin{pmatrix}
 p\\ p_\tau
\end{pmatrix}_\tau=\begin{pmatrix}
0 & 1\\
-1& 0
\end{pmatrix}\begin{pmatrix}
 p\\ p_\tau
\end{pmatrix}+\begin{pmatrix}
0 \\  -\frac 18 f'''(0)p^3 
\end{pmatrix}
\end{equation}
and linearize it along $(p,p_\tau)$, we obtain
\[\begin{pmatrix}
\tilde p\\\tilde p_1
\end{pmatrix}_\tau=\begin{pmatrix}
0 & 1\\
-1-\frac 38 f'''(0)p^2 & 0
\end{pmatrix}\begin{pmatrix}
\tilde p\\\tilde p_1 
\end{pmatrix}.\]
By Floquet theory, the non-degeneracy assumption of $p(\tau)$ implies 
\begin{enumerate}
\item[(ND)] The monodromy matrix generated by $\begin{pmatrix}
0 & 1\\
-1-\frac 38 f'''(0)p^2 & 0
\end{pmatrix}$ has $1$ as an eigenvalue with geometric multiplicity $1$.
\end{enumerate} 
Now we are ready to state our result.
\begin{MT}
Let $p(\tau)$ be a non-degenerate periodic solution of \eqref{eq2.12} with period $p$. If $f$ satisfies (A), then there exist $\ep_0\ll1$ and a (resonance) set $R_{p,\alpha,l}$ such that for every $\ep\in (0,\ep_0)\backslash R_{p,\alpha,l}$, \eqref{eq1.1} has a solution $u(x,t)$ even in $x$ and odd in $t$ satisfying 
\begin{equation}\label{eq2.14}
u(x,t+\frac{2\pi}{\sqrt{1+\ep^2}})=u(x,t)\ , \ u(x+\frac{p}{\ep\sqrt{1+\ep^2}},t)=u(x,t).
\end{equation}
Moreover, there exist two positive constants $C$ and $c$ independent of $\ep$ such that 
\begin{equation}\label{eq2.16}
\|u\|_{C^0_{x,t}}\leq C\ep\ , \ \|Q(\frac{u(x,\cdot)}{\ep}-p(\ep\sqrt{1+\ep^2}x))\|_{C^0_t}\leq Ce^{-\frac {c}{\ep}},\end{equation}
where $Q$ is the orthogonal projector onto the linear space $\oplus_{k=2}^{\infty}\{\mathbb{R}\sin{(k\sqrt{1+\ep^2}t)}\}$.
\end{MT}
The non-degeneracy assumption is standard in the continuation theory of periodic orbits, see \cite{C06} for example. We will use (ND) which is derived from the non-degeneracy of $p$ to match all coordinates except one of the time $p$-map for the perturbed system near the unperturbed orbit. The only missing direction will be recovered by the invariance of the Hamiltonian defined in \eqref{eq2.7}. The following analysis is based on \eqref{eq2.3} and \eqref{eq2.4}, which is obtained through the spatial dynamics formulation of \eqref{eq1.1}. Therefore, any result for \eqref{eq2.3} and \eqref{eq2.4} is reflected in \eqref{eq1.1} by swapping $x$ and $t$.

\section{Partial Normal Form Transformations}\label{P}
The plan of this section is to construct partial normal form transformations for \eqref{eq2.10} and \eqref{eq2.11}. Such transformations average out the $O(1)$ driving term $\tilde g$ to be $O(e^{-[\frac c\ep]})$. The transformations presented here is similar to the one in 
\cite{N84}. In general, as the example (an ODE system) shown in \cite{N84}, one cannot use such transformations to eliminate $\tilde g$ completely. Consequently, the exponentially small estimates in the main theorem is optimal. 

With slight abuse of notation, we will drop $\tilde {}$ signs in \eqref{eq2.10} and \eqref{eq2.11}, namely, we have
\begin{equation}\label{eq3.1}
\left\{\begin{aligned}
& v_{\tau\tau}=-\frac{1}{\om^2} v+ f(v,w,\ep),\\
& w_{\tau\tau}=\frac{J_\ep}{\ep^2}w+g(v,w,\ep).
\end{aligned}\right.
\end{equation}
Let $V=(v,v_\tau)$. In the first step, we set $w_1=w+\ep^2J_\ep^{-1}g(v,0,\ep)$, then
\begin{equation}\label{eq3.6}
\begin{aligned}
w_{1\tau\tau}=&\frac{J_\ep}{\ep^2}w_1+g(v,w_1-\ep^2J_\ep^{-1}g(v,0,\ep),\ep)-g(v,0,\ep)+\ep^2(J_\ep^{-1}g(v,0,\ep))_{\tau\tau}\\
=&\frac{J_\ep}{\ep^2}w_1+\Big(\int_0^1D_2g(v,p(w_1-\ep^2J_\ep^{-1}g(v,0,\ep)),\ep)\ dp\Big)w_1\\
&-\ep^2\Big(\int_0^1D_2g(v,p(w_1-\ep^2J_\ep^{-1}g(v,0,\ep)),\ep)\ dp\Big)L^{-1}g(v,0,\ep)\\
&+\ep^2(J_\ep^{-1}g(v,0,\ep))_{\tau\tau}\\
\triangleq&\frac{J_\ep}{\ep^2}w_1+g_1(v,w_1,\ep)w_1+\ep^2\bar g_1(V, w_1,\ep).
\end{aligned}
\end{equation}
We repeat such procedure $k$ times to obtain
\begin{equation}\label{eq3.7}
w_{k\tau\tau}=\frac{J_\ep}{\ep^2}w_k+g_k(v,w_k,\ep)w_k+\ep^{2k}\bar g_k(V,w_k,\ep).
\end{equation}
In the next step, we set $w_{k+1}=w_k+\ep^{2k+2}J_\ep^{-1}\bar g_k(V,0,\ep)$ to have
\begin{equation}\label{eq3.8}
\begin{aligned}
\p_{\tau\tau}w_{k+1}=&\frac{J_\ep}{\ep^2}w_{k+1}+g_k(v,w_k,\ep)w_{k+1}+\ep^{2k}\big(\int_0^1D_2\bar g_k(V,pw_k,\ep)\ dp\big)w_{k+1}\\
&-\ep^{2k+2}\big(\int_0^1D_2\bar g_k(V,pw_k,\ep)\ dp\big)J_\ep^{-1}\bar g_k(V,0,\ep)\\
&-\ep^{2k+2}g_k(v,w_k,\ep)J_\ep^{-1}\bar g_k(V,0,\ep)+\ep^{2k+2}(J_\ep^{-1}\bar g_k(V,0,\ep))_{\tau\tau}\\
\triangleq&\frac{J_\ep}{\ep^2}w_{k+1}+g_{k+1}(v,w_{k+1},\ep)w_{k+1}+\ep^{2k+2}\bar g_{k+1}(v,w_{k+1},\ep).
\end{aligned}
\end{equation}
 
To obtain estimates on $(g_k,\bar g_k)$, we complexify $H^1$ to $H^1\oplus iH^1$. We define the complex neighborhood of the real axis
\[S_K\triangleq\{z\in\mathbb C\big||\Im z|<K\}.\]
Since we will construct bounded orbits in the phase space, we assume
\begin{equation}\label{eq3.9}
(\|f\|+\|g\|)_{C^2(S_K\times S_K\times [0,\ep_0))}+\|u\|_{C^1}\leq C(K).
\end{equation}
Here we choose $K$ to be large enough so that any prescribed non-degenerate periodic orbit of \eqref{eq2.12} have magnitude less than $K$.
\begin{prop}\label{prop3.1}
Given $K>0$, there exist $C(K)$, $K_1\gg 1$ and $0<\ep_0\ll 1$ such that for every $\ep\in[0,\ep_0)$,
\begin{equation}\label{eq3.10}
\begin{aligned}
&\|g_k\|_{C^0(S_{K-\ep k K_1}\times S_{K-\ep k K_1}\times [0,\ep_0))}\leq (2-\frac{1}{K_1^{k-1}})C(K),\\
&\|\ep^{2k}\bar g_k\|_{C^0(S_{K-\ep k K_1}\times S_{K-\ep k K_1}\times [0,\ep_0))}\leq \frac{ C(K)}{K_1^{k-1}}.
\end{aligned}
\end{equation}
\end{prop}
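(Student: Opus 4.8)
The plan is to prove \eqref{eq3.10} by induction on $k$, using the recursive construction \eqref{eq3.6}--\eqref{eq3.8} and the contraction of the complex strip $S_{K-\ep kK_1}$ at each step. The base case $k=1$ is read off directly from \eqref{eq3.6}: we have $g_1=\int_0^1 D_2 g(v,p(w_1-\ep^2J_\ep^{-1}g(v,0,\ep)),\ep)\,dp$, which by \eqref{eq3.9} is bounded by $C(K)$ on $S_{K-\ep K_1}\times S_{K-\ep K_1}\times[0,\ep_0)$ once $\ep_0$ is small enough that the shift $\ep^2J_\ep^{-1}g(v,0,\ep)$ (of size $O(\ep^2)$ by \eqref{eq2.9}) keeps arguments inside $S_K$; this matches $(2-1/K_1^0)C(K)=C(K)$. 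Similarly $\ep^2\bar g_1$ collects the three remaining terms in \eqref{eq3.6}, each carrying an explicit $\ep^2$ and each bounded using \eqref{eq3.9}, \eqref{eq2.9}, and — for the $(J_\ep^{-1}g(v,0,\ep))_{\tau\tau}$ term — the $C^1$ control on $u$ (hence on $V=(v,v_\tau)$) together with the chain rule, giving $\|\ep^2\bar g_1\|\le C(K)=C(K)/K_1^0$ after possibly enlarging $C(K)$.

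For the inductive step I would assume \eqref{eq3.10} at level $k$ and analyze \eqref{eq3.8}. The new coefficient is $g_{k+1}=g_k(v,w_k,\ep)+\ep^{2k}\int_0^1 D_2\bar g_k(V,pw_k,\ep)\,dp$, and the new remainder $\ep^{2k+2}\bar g_{k+1}$ gathers the last three lines of \eqref{eq3.8}, each of which carries either an explicit factor $\ep^{2k+2}$ or arises from differentiating the $O(\ep^{2k+2})$ shift. The key quantitative point: by the inductive bound, $\|\ep^{2k}\bar g_k\|\le C(K)/K_1^{k-1}$, so on the slightly smaller strip $S_{K-\ep(k+1)K_1}$ we get
\[
\|g_{k+1}\|\le \Big(2-\tfrac{1}{K_1^{k-1}}\Big)C(K)+\tfrac{C(K)}{K_1^{k-1}}\le\Big(2-\tfrac{1}{K_1^{k}}\Big)C(K),
\]
where the last inequality holds once $K_1$ is chosen large enough that $\tfrac{1}{K_1^{k-1}}-\tfrac{1}{K_1^{k}}\ge \tfrac{1}{K_1^{k-1}}-\tfrac{1}{K_1^{k}}$, i.e. automatically, but more to the point one needs the \emph{net} increment to respect the telescoping: the gain $1/K_1^{k-1}$ lost from the bound is offset because $2-1/K_1^{k-1}+1/K_1^{k-1}=2$ would be too weak, so instead I'd carry the sharper bookkeeping $\|g_k\|\le (2-1/K_1^{k-1})C(K)$ means the \emph{increment} at each step is $\le C(K)/K_1^{k-1}$, and $\sum_{j\ge 1}C(K)/K_1^{j-1}=C(K)K_1/(K_1-1)<2C(K)$ for $K_1$ large, so the partial sums stay below $(2-1/K_1^k)C(K)$. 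For the remainder, each term in lines two through four of \eqref{eq3.8} is a product of a factor already controlled at level $k$ (namely $g_k$, $\int D_2\bar g_k$, or $\bar g_k$ itself) with $J_\ep^{-1}\bar g_k(V,0,\ep)$ or its two $\tau$-derivatives, times $\ep^{2k+2}$; using \eqref{eq2.9}, the level-$k$ bounds, the $C^1$ bound on $u$ from \eqref{eq3.9} to handle $(\cdot)_{\tau\tau}$ via the chain rule, and a uniform Cauchy estimate on the strip to pass from $C^0$ to the needed derivative bounds of $g_k,\bar g_k$ (this is where the $\ep K_1$-width loss of strip is spent — each differentiation in the complexified variable costs a factor $(\ep K_1)^{-1}$, compensated by the explicit $\ep^2$), one obtains $\|\ep^{2k+2}\bar g_{k+1}\|\le C(K)/K_1^{k}$ provided $K_1$ is large enough, closing the induction.

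The main obstacle is the remainder estimate in the inductive step, specifically controlling the term $\ep^{2k+2}\bigl(J_\ep^{-1}\bar g_k(V,0,\ep)\bigr)_{\tau\tau}$. Differentiating twice in $\tau$ produces $D_V\bar g_k\cdot V_\tau$-type terms and, through $V_\tau=(v_\tau,v_{\tau\tau})$ and \eqref{eq2.10}, reintroduces $f(v,w,\ep)$; one must check this does not break the scaling. The resolution is that these derivatives are estimated by Cauchy's inequality on the complex strip $S_{K-\ep kK_1}$: shrinking to $S_{K-\ep(k+1)K_1}$ gives a margin of width $\ep K_1$, so $\|D_2\bar g_k\|_{C^0(S_{K-\ep(k+1)K_1})}\le (\ep K_1)^{-1}\|\bar g_k\|_{C^0(S_{K-\ep kK_1})}$, and the explicit prefactor $\ep^{2k+2}$ beats the at-most-two factors of $(\ep K_1)^{-1}$ while the $C^1$ bound \eqref{eq3.9} on $u$ controls the $V_\tau$ growth uniformly in $\ep$. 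Choosing $K_1\gg 1$ first (to absorb all the $O(1)$ combinatorial constants and make the geometric series converge) and then $\ep_0\ll 1$ (so that $\ep_0 k K_1<K/2$ for the range of $k$ we use, keeping the strips nonempty) makes every estimate uniform, which completes the proof.
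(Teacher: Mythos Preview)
Your approach is the same as the paper's: induction on $k$, with the base case read off from \eqref{eq3.6} and the inductive step from the recursion \eqref{eq3.8}, using Cauchy estimates on the complex strips (shrunk by $\ep K_1$ at each stage) to control the derivatives of $g_k,\bar g_k$ that appear. Your treatment of the remainder $\ep^{2k+2}\bar g_{k+1}$ --- including the identification of $(J_\ep^{-1}\bar g_k(V,0,\ep))_{\tau\tau}$ as the delicate term and the mechanism ``each complex derivative costs $(\ep K_1)^{-1}$, compensated by the explicit $\ep^2$'' --- matches the paper exactly.

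There is, however, a genuine bookkeeping gap in your $g_{k+1}$ estimate. The increment is $g_{k+1}-g_k=\ep^{2k}\int_0^1 D_2\bar g_k\,dp$, which involves the \emph{derivative} $D_2\bar g_k$, not $\bar g_k$ itself. You bound this increment by $\|\ep^{2k}\bar g_k\|\le C(K)/K_1^{k-1}$ directly, but that ignores the $D_2$; the same Cauchy mechanism you invoke for the remainder must be applied here too, and it produces an extra factor $(\ep K_1)^{-1}$. Relatedly, your displayed inequality
\[
\Big(2-\tfrac{1}{K_1^{k-1}}\Big)C(K)+\tfrac{C(K)}{K_1^{k-1}}\;\le\;\Big(2-\tfrac{1}{K_1^{k}}\Big)C(K)
\]
is literally false (the left side equals $2C(K)$), which you notice, but your fallback geometric-series argument still carries the wrong increment. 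The paper handles this step by writing the increment explicitly as a Cauchy integral $\ep^2\bigl|\tfrac{1}{2\pi i}\oint_{B_{\ep K_1}}\tfrac{\,\cdot\,}{z^2}\,dz\bigr|$ over a circle of radius $\ep K_1$, which is precisely the mechanism you applied correctly for $\bar g_{k+1}$ but omitted for $g_{k+1}$.
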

\begin{proof}
We will prove \eqref{eq3.10} inductively. For $k=1$, by definition in \eqref{eq3.6}, we have
\[\|g_1\|_{C^0(S_{K-\ep  K_1}\times S_{K-\ep  K_1}\times [0,\ep_0))}\leq \|Dg\|_{C^0(S_{K}\times S_{K}\times [0,\ep_0))}\leq C(K),\]
and
\[\begin{aligned}
&\|\ep^2\bar g_1\|_{C^0(S_{K-\ep k K_1}\times S_{K-\ep k K_1}\times [0,\ep_0))}\\
\leq & \ep^2\|J_\ep^{-1}\|\|g\|_{C^2}(\|g\|_{C^0}+\|v_x\|_{C^0}^2+\|v\|_{C^0}+\|f\|_{C^0})\leq C(K).
\end{aligned}\]
Assuming \eqref{eq3.10} holds for $k-1$, i.e., 
\begin{equation}\label{eq3.11}
\begin{aligned}
&\|g_{k-1}\|_{C^0(S_{K-\ep (k-1) K_1}\times S_{K-\ep (k-1) K_1}\times [0,\ep_0))}\leq (2-\frac{1}{K_1^{k-2}})C(K),\\
&\|\ep^{2(k-1)}\bar g_{k-1}\|_{C^0( S_{K-\ep (k-1) K_1}\times S_{K-\ep (k-1) K_1}\times [0,\ep_0))}\leq \frac{ C(K)}{K_1^{k-2}}.
\end{aligned}
\end{equation}
By definition of $(g_k,\bar g_k)$ in \eqref{eq3.8} and the Cauchy integral formula,
\[\begin{aligned}
\|g_{k}\|_{C^0(S_{K-\ep k K_1}\times S_{K-\ep k K_1}\times [0,\ep_0))}
\leq & \|g_{k-1}\|_{C^0( S_{K-\ep (k-1) K_1}\times S_{K-\ep (k-1) K_1}\times [0,\ep_0))}\\
&+\ep^2\Big|\frac{1}{2\pi i}\oint_{B_{\ep K_1}(\mathbb C)}\frac{\ep^{2k-2}g_{k-1}(\ep,v+z,w_{k-1}+z)}{z^2}\ dz\Big|\\
\leq & (2-\frac{1}{K_1^{k-2}})C(K)+\ep^2\frac{C(K)}{K_1^{k-2}}\frac{1}{K_1\ep}\leq (2-\frac{1}{K_1^{k-1}})C(K),
\end{aligned}\]
and
\[\begin{aligned}
&\|\ep^{2k}\bar g_k\|_{C^0(S_{K-\ep k K_1}\times S_{K-\ep k K_1}\times [0,\ep_0))}\\
\leq & 2\ep^2\frac{2C(K)}{K_1\ep}\|J_\ep^{-1}\|\frac{ C(K)}{K_1^{k-2}}+2\ep^2\frac{(|v_x|^2+|v|+C(K)) C(K)}{K_1^{k-2}(K_1\ep)^2}\leq\frac{ C(K)}{K_1^{k-1}}.
\end{aligned}\]
Thus, the proof is completed.
\end{proof}
The proposition implies we can perform the partial normal form transformations $[\frac c\ep]$ times, where $c$ is possibly small but independent of $\ep$. Then the driving term $\ep^{2[\frac{c}{\ep}]}\bar g_{[\frac c\ep]}$ is exponentially small in $\ep$. With slight abuse of notation, we write the transformed system as
\begin{equation}\label{eq3.12}
\left\{\begin{aligned}
& v_{\tau\tau}=-\frac{1}{\om^2} v+ f(v,w,\ep),\\
& w_{\tau\tau}=\frac{J_\ep}{\ep^2}w+g(v,w,\ep)+\bar g(v,v_\tau,w,\ep),
\end{aligned}\right.
\end{equation}
By shrinking $K-cK_1$ a little bit, we can assume
\begin{equation}\label{eq3.13}
(\|f\|+\|g\|+e^{[\frac c\ep]}\|\bar g\|)_{C^2(S_{K-2c K_1}\times S_{K-2c K_1}\times [0,\ep_0))}\leq C.
\end{equation}
The partial normal form transformations can be thought as an averaging procedure, which makes $\{w=0\}$ to be almost invariant up to an error of $O(e^{-[\frac c\ep]})$. 
\begin{rem}
If $f\in C^k$, where $k\geq 5$, we can obtain $\{w=0\}$ is almost invariant up to $O(\ep^{2(k-5)})$.
\end{rem}

\section{Proof}\label{proof} 
In this section, we give the proof for the Main Theorem. To find periodic solutions of \eqref{eq3.12}, we study their time-$p$ map around the periodic orbit of \eqref{eq2.12} . The main difficulty is the small divisor problem, which is overcome by carefully choosing a non-resonance set and running a Nash-Moser iteration argument. We adopt and modify the strategy introduced in \cite{BB,BB10} to handle the singular parameter of our problem. We start by rewriting \eqref{eq3.12} in a slightly different form
\begin{equation}\label{eq6.1}
\left\{\begin{aligned}
&\begin{pmatrix}
v \\ v_1
\end{pmatrix}_\tau=\begin{pmatrix}
0 & \frac 1\om\\
-\frac 1\om & 0
\end{pmatrix}\begin{pmatrix}
v \\ v_1
\end{pmatrix}+\begin{pmatrix}
0 \\ \om f(v,w,\ep)
\end{pmatrix},\\
&(J_\ep-\ep^2\p_{\tau\tau})w+\ep^2 \big((g(v,w,\ep)+\bar g(v,v_\tau,w,\ep))\big)=0,
\end{aligned}\right.
\end{equation}
where we recall that $\om=\sqrt{1+\ep^2}$ and $J_\ep=\p_{xx}+\frac{1}{1+\ep^2}$.

Let $P_\star$ be a non-degenerate (in the sense of (ND) in Section \ref{SM}) periodic orbit of \eqref{eq2.13a} with period $p$. We choose $P_0=P_\star(0)\in P_\star$ and use $v,v^\bot$ to denote $\dot P_\star(0)$ and $DH_\star(P_0)$, respectively. The invariance of $H_\star$ (defined in \eqref{eq2.14}) for \eqref{eq2.13a} implies $v$ is perpendicular to $v^\bot$. Let $\mathcal V(V(0);w,\ep)$ be the time $p$-map of the first equation of \eqref{eq6.1} with initial condition at $V(0)$ and parameters $(w,\ep)$. We look for periodic solutions of \eqref{eq6.1} as follows:
\begin{enumerate}
\item[(1)] Given any $p$-periodic function $V(\cdot)$ such that there exists $V(0)\in V(\cdot)$ with
\[V(0)=P_0+\delta_1v^\perp.\] 
We identify a resonance set $R_{p,\alpha,l}$ such that for $\ep\in (0,\ep_0)\backslash R_{p,\alpha,l}$, we obtain a  solution $w(V,\ep)$ that satisfies the second equation of \eqref{eq6.1}. 
\item[(2)] Plugging such $w$ into the first equation of \eqref{eq6.1} to find $V(\ep)$ such that
\begin{equation}\label{U1}
P_v\big(\mathcal V(P_0+\delta_1(\ep)v^\bot;w(V(\ep),\ep),\ep)-P_0\big)=0,\end{equation}
where $P_v$ is the orthogonal projection onto $\R v$.
\item[(3)] Use the invariance of $H$ defined in \eqref{eq2.7} to recover the missing direction of $V(\ep)$ and thus conclude $V(\ep)$ satisfies the first equation of \eqref{eq6.1}.
\end{enumerate}
The rest of this section is split into $4$ subsections. In subsection \ref{function space}, we introduce the function space and the linear operator $\mathcal L$ associated to the $w$-equation. In the next subsection, we identify the resonance set and analyze the bounds on the inverse of $\mathcal L$. In subsection \ref{NM}, we go through the Nash-Moser iteration to obtain solution for the $w$-equation. Finally, we solve the $V$-equation.

\subsection{Function Space $\Gamma_s$ and Linear Operator $\mathcal L$}\label{function space} 
The function space we will be working on for the $p$-equation is the Sobolev space involving space and time.
We restrict ourselves to solutions that are even in $\tau$. Define the space
\begin{equation}\label{S}
\begin{aligned}
\Gamma_s=\{w\in H^1([0,p],QH^s)\big| w(x,t)=\sum_{\substack{|k|\geq 2\\ j\in\mathbb Z}}&w_{j,k}e^{i\frac{2\pi j}{p}\tau+i k x},\\
&w_{j,k}=w_{-j,k}=-w_{j,-k}\},\end{aligned}
\end{equation}
which is equipped with the standard space-time Sobolev norm
\[\|w\|_s^2=\sum_{|k|\geq 2,j}(1+|j|)^2|k|^{2s}|w_{j,k}|^2.\]
Let $\Pi_N$ and $\Pi_N^c$ be the projection operators (along $x$-direction) given by 
\[\widehat{\Pi_Nf}(k)=\left\{\begin{aligned}
\hat f(k)\ , \ &|k|\leq N\\
0\ , \ & |k|>N
\end{aligned}
\right.
\ , \ \widehat{\Pi_{N}^cf}(k)=\left\{\begin{aligned}
0\ , \ &|k|\leq N\\
\hat f(k)\ , \ & |k|>N
\end{aligned}
\right.
.\]
By standard argument, we have
\begin{prop}
For any $N\geq1$, $0\leq m_1<m_2$ and $m=(1-t)m_1+tm_2$,
\begin{eqnarray}\label{lp1}
&&\|\Pi_Nh\|_{m_2}\leq N^{m_2-m_1}\|h\|_{m_1}\ , \ \forall h\in \Gamma_{m_1},\\\label{lp2}
&& \|\Pi_N^ch\|_{m_1}\leq N^{-(m_2-m_1)}\|h\|_{m_2}\ , \ \forall h\in \Gamma_{m_2}.
\end{eqnarray}
\end{prop}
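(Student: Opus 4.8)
The plan is to reduce both inequalities to an elementary pointwise estimate on the spatial Fourier multiplier $|k|\mapsto|k|^{s}$, exploiting that $\Pi_N$ and $\Pi_N^c$ act only on the $x$-index $k$ and leave the temporal index $j$ and the weight $(1+|j|)^2$ in the norm $\|\cdot\|_s$ entirely untouched. Concretely, for $w\in\Gamma_s$ written as $w=\sum_{|k|\ge 2,\,j\in\mathbb Z}w_{j,k}e^{i\frac{2\pi j}{p}\tau+ikx}$ one has $\|w\|_s^2=\sum_{|k|\ge 2,\,j}(1+|j|)^2|k|^{2s}|w_{j,k}|^2$, so only the factor $|k|^{2s}$ changes with $s$, and the whole statement becomes a weighted $\ell^2$ estimate in which the weights are modified modewise.

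For \eqref{lp1}, I would note that $\Pi_Nh$ retains exactly the modes with $2\le|k|\le N$, and for such $k$, since $m_2>m_1$, we have $|k|^{m_2-m_1}\le N^{m_2-m_1}$, hence $|k|^{2m_2}\le N^{2(m_2-m_1)}|k|^{2m_1}$. Summing this bound against $(1+|j|)^2|w_{j,k}|^2$ over the retained modes and then enlarging the index set to all of $\{|k|\ge 2\}$ gives $\|\Pi_Nh\|_{m_2}^2\le N^{2(m_2-m_1)}\|h\|_{m_1}^2$. For \eqref{lp2} the argument is dual: $\Pi_N^ch$ retains exactly the modes with $|k|>N$, where $|k|^{-(m_2-m_1)}<N^{-(m_2-m_1)}$, so $|k|^{2m_1}\le N^{-2(m_2-m_1)}|k|^{2m_2}$; the same summation yields $\|\Pi_N^ch\|_{m_1}^2\le N^{-2(m_2-m_1)}\|h\|_{m_2}^2$. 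Taking square roots gives the two asserted bounds.

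There is essentially no obstacle here: these are the two standard low-mode ``smoothing'' and high-mode ``tail'' estimates that drive the Nash-Moser scheme, and the proof is a one-line Fourier computation. The only minor points are that $|k|\ge 2$ on $\Gamma_s$ (harmless, and in fact it only improves the constants) and that the intermediate exponent $m=(1-t)m_1+tm_2$ does not enter either inequality as stated — it merely records the interpolation parameter that will be used when these bounds are combined in the iteration — so I would simply mention it and proceed.
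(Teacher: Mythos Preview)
Your argument is correct and is exactly the ``standard argument'' the paper alludes to but does not spell out: the paper simply records the proposition without proof, and your modewise Fourier-multiplier computation is precisely what that phrase refers to. Your remark that the intermediate exponent $m=(1-t)m_1+tm_2$ plays no role in \eqref{lp1}--\eqref{lp2} as stated is also accurate.
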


Let
\begin{equation}\label{eq6.6}
\tilde g(V,w,\ep)=g(v,w,\ep)w+\bar g(V,w,\ep),
\end{equation}
where $(g,\bar g)$ are from \eqref{eq6.1} and $V=(v,v_1)$. Let $H^1_\tau$ be the function space that consists of $H^1$ functions which are $p$-periodic and even in $\tau$ endowed with the inner product 
\begin{equation}\label{inner}
\langle u,v\rangle=\int_{0}^{p} u_\tau v_\tau+(\frac{1}{\pi}\int_{-\pi}^{\pi}D_w\tilde g(V,w,\ep)\ dx+C)uv\ d\tau \ , \ C\gg1.\footnote{The function $D_w\tilde g(V,w,\ep)$ is even in $x$. This is because $\tilde g$ is odd in $p$ and $V$ enters $\tilde g$ as $V\sin{y}$.}
\end{equation}
It is easy to verify that $(-\p_{\tau\tau}+\frac{1}{\pi}\int_{-\pi}^{\pi}D_w\tilde g(V,w,\ep)\ dx+C)^{-1}$ is a compact, positive and self-adjoint operator on $H_\tau^1$ under the inner product $\langle \cdot,\cdot\rangle$ introduced above. It implies the eigenvalues $\lambda_j$ of linear operator $-\p_{xx}+\frac 1\pi \int_{-\pi}^{\pi}D_w\tilde g(P_\star,p,\ep)\ dx$ are simple, bounded from below and grow like $O(j^2)$. Moreover, there exists an orthonormal basis $\{\phi_j(\tau)\}_{j\geq1}$ (with respect to the inner product in \eqref{inner}) such that
\[\big(-\p_{\tau\tau}+\frac 1\pi \int_{-\pi}^{\pi}D_w\tilde g(V,w,\ep)\ dx\big)\phi_j(\tau)=\lambda_j\phi_j(\tau).\]
Let $\ep_{k,j}>0$ be the solution of 
\[-k^2+\frac{1}{1+\ep_{k,j}^2}+\ep^2_{k,j}\lambda_j=0,\]
where $|k|\geq 2$. It is easy to calculate for $j\gg1$,
\begin{equation}\label{eq6.8}
\ep_{k,j}=\frac{p}{2\pi}|\frac{k}{j}|+O(|\frac{k}{j^3}|).
\end{equation}
Given any $(V,w)$ and $2\leq N_1\leq N_2$, we let
\begin{equation}\label{Set}
 R_{V,w}^{N_1,N_2}=\bigcup_{N_1\leq|k|\leq N_2,j}(\ep_{k,j}-\frac{ |k|^\alpha}{j^l},\ep_{k,j}+\frac{ |k|^\alpha}{j^l}),
 \end{equation}
where $2\leq 2+\alpha<l<3$. 

We define the nonlinear map $\mathcal F$ and its linearization $\mathcal L$ as 
\begin{eqnarray}\label{F}
&&\mathcal F(V,w,\ep)\triangleq (J_\ep-\ep^2\p_{\tau\tau})w+\ep^2\tilde g(V,w,\ep),\\\label{L}
&&\mathcal L(V,w,\ep)\triangleq J_\ep+\ep^2(-\p_{\tau\tau}+D_w\tilde g(V,w,\ep)).
\end{eqnarray}
Thus, finding a solution for the $w$-equation is equivalent to solving 
\begin{equation}\label{eq6.9.1}
\mathcal F(V,w,\ep)=0.\end{equation}
We will find a solution of \eqref{eq6.9.1} through a sequence of approximate solutions $\{w_i\}_{i\geq1}$, where each $w_i\in\Pi_{N_i}\Gamma_s$ satisfies
\begin{equation}\label{eq6.9.2}
\Pi_{N_i}\mathcal F(U,w_i,\ep)=0 \ , \ \lim_{i\to\infty}N_i=\infty.
\end{equation}
We will solve \eqref{eq6.9.2} for each $i$ by a contraction mapping argument. It involves inverting the linear operator $\Pi_{N_i}\mathcal L$, which is analyzed in the next subsection.

\subsection{Resonance Set $R_{p,\alpha,l}$ and Bounds on $(\Pi_N\mathcal L)^{-1}$} In this subsection, we identify the resonance set $R_{p,\alpha,l}$ so that for $\ep\notin R_{p,\alpha,l}$, each $(\Pi_N\mathcal L(U,w_i,\ep))^{-1}$ has a good enough bound to complete the Nash-Moser iteration. To simplify our notations, we let
\begin{equation}\label{eq6.10}
\gamma=l-\alpha-2\in(0,1).
\end{equation}
 
\begin{prop}\label{prop6.2}
For any $s>\frac{\gamma+1}{2}$, $N\geq 2$, $w\in\Gamma_{s+\frac{\gamma}{2}}$ and $\ep\in(0,\ep_0)\backslash R_{V,w}^{2,+\infty}$ and $h\in \Pi_N\Gamma_s$, we have
\begin{equation}\label{eq6.11}
\|(\Pi_N\mathcal L(V,w,\ep))^{-1}h\|_{s-\gamma}\leq C\ep^{-(l-1)}\|h\|_s,
\end{equation}
where $C$ is independent of $\ep$ and $h$. Consequently,
\begin{equation}\label{eq6.12}
\|(\Pi_N\mathcal L(V,w,\ep))^{-1}h\|_{s}\leq CN^\gamma\ep^{-(l-1)}\|h\|_s.
\end{equation}
\end{prop}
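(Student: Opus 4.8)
The plan is to split the operator in \eqref{L} into its $x$-average and a small $x$-dependent remainder. Writing $\langle\phi\rangle=\frac1\pi\int_{-\pi}^{\pi}\phi\,dx$, set $\mathcal L_0=J_\ep+\ep^2(-\p_{\tau\tau}+\langle D_w\tilde g(V,w,\ep)\rangle)$ and $B=D_w\tilde g(V,w,\ep)-\langle D_w\tilde g(V,w,\ep)\rangle$, so that $\mathcal L=\mathcal L_0+\ep^2 B$. The operator $\mathcal L_0$ is diagonal in the basis $\{e^{ikx}\phi_j(\tau)\}_{|k|\ge2,\,j\ge1}$, with $\phi_j$ from \eqref{inner} and eigenvalues $G_{k,j}(\ep):=-k^2+\tfrac1{1+\ep^2}+\ep^2\lambda_j$; by construction $B$ is, modulo the projector $Q$, multiplication by a bounded function of $(x,\tau)$ with vanishing zeroth $x$-Fourier coefficient (the hypothesis $w\in\Gamma_{s+\gamma/2}$, together with \eqref{eq3.13}, is what makes $D_w\tilde g$ well defined and bounded in the spaces in play). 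Thus \eqref{eq6.11} reduces to a sharp bound for $\mathcal L_0^{-1}$ followed by an absorption of the term $\ep^2 B$, and then \eqref{eq6.12} is immediate.

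The core is the diagonal estimate $\|\mathcal L_0^{-1}h\|_{s-\gamma}\le C\ep^{-(l-1)}\|h\|_s$. Expanding $h$ in $\{e^{ikx}\phi_j\}$ and using that the $\phi_j$ are orthonormal with $\lambda_j\sim j^2$, so that the $j$-weights coincide with those of $\Gamma_s$ up to constants, this amounts exactly to the scalar lower bound $|G_{k,j}(\ep)|\ge C^{-1}\ep^{l-1}|k|^{-\gamma}$ for all $|k|\ge2$, $j\ge1$ and $\ep\in(0,\ep_0)\setminus R_{V,w}^{2,+\infty}$. To prove it, write $\lambda_k^*(\ep)=\tfrac{k^2-1}{\ep^2}+\tfrac1{1+\ep^2}$, so that $G_{k,j}(\ep)=\ep^2(\lambda_j-\lambda_k^*(\ep))$, and note that the definition of $\ep_{k,j}$ gives $\lambda_j=\lambda_k^*(\ep_{k,j})$. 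By the mean value theorem $G_{k,j}(\ep)=\ep^2(\ep_{k,j}-\ep)\,\partial_\ep\lambda_k^*(\xi)$ for some $\xi$ between $\ep$ and $\ep_{k,j}$, and $|\partial_\ep\lambda_k^*(\xi)|=\tfrac{2(k^2-1)}{\xi^3}+\tfrac{2\xi}{(1+\xi^2)^2}\ge\tfrac{2(k^2-1)}{\xi^3}$. If $\tfrac12\ep_{k,j}\le\ep\le2\ep_{k,j}$, then $\xi\sim\ep\sim\ep_{k,j}$, and combining $|\ep-\ep_{k,j}|\ge|k|^\alpha j^{-l}$ (the defining property of $\ep\notin R_{V,w}^{2,+\infty}$ in \eqref{Set}) with $\ep_{k,j}=\tfrac{p}{2\pi}|k/j|+O(|k/j^3|)$ from \eqref{eq6.8} — hence $j^{-l}\sim\ep^l|k|^{-l}$ in this regime — yields $|G_{k,j}(\ep)|\gtrsim\ep^2\cdot\ep^l|k|^{\alpha-l}\cdot\tfrac{k^2}{\ep^3}=\ep^{l-1}|k|^{-(l-\alpha-2)}=\ep^{l-1}|k|^{-\gamma}$, using \eqref{eq6.10}. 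In the complementary regime $\ep<\tfrac12\ep_{k,j}$ or $\ep>2\ep_{k,j}$ — which in particular covers every pair with $\ep_{k,j}\ge\ep_0$ — one has $|\lambda_j-\lambda_k^*(\ep)|\gtrsim\tfrac{k^2-1}{\ep^2}$ directly, so $|G_{k,j}(\ep)|\gtrsim k^2-1$ and the inequality is trivial. This proves \eqref{eq6.11} for $\mathcal L_0$; it is sharp because the $|k|^{-\gamma}$ weakness of the divisor at mode $k$ is exactly compensated by the $\gamma$ derivatives given up in passing from $\|\cdot\|_s$ to $\|\cdot\|_{s-\gamma}$, which is why the constant does not depend on $N$.

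It remains to absorb $\ep^2 B$. Since $B$ is bounded multiplication (mod $Q$), $\|\ep^2 B\|_{L(\Gamma_m)}=O(\ep^2)$; rewriting $\Pi_N\mathcal L w=h$ as $\Pi_N\mathcal L_0 w=h-\ep^2\Pi_N B w$, applying the diagonal estimate and using $\|w\|_s\le N^\gamma\|w\|_{s-\gamma}$ gives $\|w\|_{s-\gamma}\le C\ep^{-(l-1)}\|h\|_s+C\ep^{3-l}N^\gamma\|w\|_{s-\gamma}$. As $3-l>0$, for the block of modes $|k|\lesssim\ep^{-(3-l)/\gamma}$ the last term is absorbed directly; for higher modes I would run the separation-of-singular-sites scheme of \cite{BB,BB10}: for each $j$ only the pair $\pm k$ can make $G_{k,j}$ small, neighbouring sites satisfy $|G_{k\pm1,j}|\gtrsim|k|$, and $B$ has vanishing zeroth $x$-mode, so the coupling produced by $B$ never meets a small divisor through the diagonal; organizing the near-resonant sites into uniformly bounded clusters, inverting those finite blocks directly and treating the rest as a genuine perturbation then yields \eqref{eq6.11} for all $N\ge2$. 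Finally \eqref{eq6.12} follows at once, since $(\Pi_N\mathcal L)^{-1}h\in\Pi_N\Gamma_s$ implies $\|(\Pi_N\mathcal L)^{-1}h\|_s=\|\Pi_N(\Pi_N\mathcal L)^{-1}h\|_s\le N^\gamma\|(\Pi_N\mathcal L)^{-1}h\|_{s-\gamma}$ by \eqref{lp1}.

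The main obstacle is the scalar small-divisor bound for $G_{k,j}(\ep)$: it depends on matching the precise resonant location $\ep_{k,j}$ from \eqref{eq6.8} against the width $|k|^\alpha j^{-l}$ of the excised intervals, which is exactly why one needs $2\le2+\alpha<l<3$ — intervals wide enough to force $|G_{k,j}|\gtrsim\ep^{l-1}|k|^{-\gamma}$ yet decaying fast enough in $j$ — and on the case analysis according to how close $\ep$ sits to the resonant value. The secondary difficulty is making the absorption of $\ep^2 B$ uniform in $N$: the smallness $\ep^2B=O(\ep^2)$ by itself is not enough at very high modes, and the separation of singular sites described above is what closes the argument there.
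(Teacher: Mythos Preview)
Your diagonal estimate for $\mathcal L_0$ is correct and matches the paper's bound $m_k\ge C\ep^{l-1}|k|^{-\gamma}$. The gap is in the absorption of $\ep^2B$. Your Neumann step produces the coefficient $\ep^{3-l}N^\gamma$, which you acknowledge is not small for large $N$; you then defer to a ``separation-of-singular-sites scheme'' from \cite{BB,BB10} without carrying it out. But that scheme is exactly the content of this proposition --- it is not something one can cite, it is what has to be proved here, and the finite-cluster picture you sketch (one bad $k$ per $j$, neighbours are good) does not by itself give an operator bound on $\Gamma_{s-\gamma}$ uniform in $N$.

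The paper closes this gap by a different device: it factors $\Pi_N\mathcal L=|\mathcal L_0|^{1/2}\bigl(|\mathcal L_0|^{-1/2}\mathcal L_0|\mathcal L_0|^{-1/2}+\ep^2|\mathcal L_0|^{-1/2}B|\mathcal L_0|^{-1/2}\bigr)|\mathcal L_0|^{1/2}$ and shows that the middle perturbation has operator norm $O(\ep^{3-l})$ on $\Gamma_{s-\gamma/2}$, \emph{independently of $N$}. The crucial estimate is that for $k\ne k'$ one has $(m_k m_{k'})^{-1/2}\le C\ep^{-(l-1)}|k-k'|^\gamma$: two distinct modes cannot be simultaneously maximally resonant at the same $\ep$, and the product of their small divisors is controlled by the \emph{difference} $|k-k'|$ rather than by $|k|$ or $|k'|$ separately. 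Since $B$ has vanishing zeroth $x$-Fourier mode, the matrix entries of $|\mathcal L_0|^{-1/2}B|\mathcal L_0|^{-1/2}$ are bounded by $|k-k'|^\gamma\|B_{k'-k}\|_{H^1_\tau}$ with $k\ne k'$, and the $|k-k'|^\gamma$ weight is then absorbed by the smoothness of $B$ (this is where the hypothesis $w\in\Gamma_{s+\gamma/2}$ and the condition $s>\tfrac{\gamma+1}{2}$ actually enter). This symmetric square-root factorization is the substantive idea your argument is missing; without it the Neumann-type absorption you wrote cannot be made uniform in $N$.
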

\begin{proof}
We first rewrite $\Pi_N\mathcal L(V,w,\ep)$ as
\[\Pi_N\mathcal L(V,w,\ep)=\mathcal L_1(V,w,\ep)+\ep^2A(V,w,\ep),\]
where
\begin{equation}\label{A}
\begin{aligned}
&\mathcal L_1(V,w,\ep)=J_\ep+\ep^2(-\p_{xx}+\frac{1}{\pi}\int_{-\pi}^{\pi}\Pi_ND_w\tilde g(V,w,\ep)\ dx),\\
&A(V,w,\ep)=\Pi_N\big(D_w\tilde g(V,w,\ep)-\frac{1}{\pi}\int_{-\pi}^{\pi}D_w\tilde g(V,w,\ep)\ dx\big).
\end{aligned}\end{equation}
For $\ep\in(0,\ep_0)\backslash R_{V,w}^{2,+\infty}$, $\mathcal L_1$ is invertible and we define $|\mathcal L_1|^{-\frac 12}$ as
\[|\mathcal L_1|^{-\frac 12}h=\sum_{2\leq |k|\leq N}\sum_{j\geq1}\frac{h_{j,k}}{\sqrt{|-k^2+\frac{1}{1+\ep^2}+\ep^2\lambda_j|}}\phi_j(\tau)e^{ikx}\]
for $h=\displaystyle\sum_{2\leq |k|\leq N}\sum_{j\geq1}h_{j,k}\phi_j(\tau)e^{ikx}$. Then we can write
\[\Pi_N\mathcal L=|\mathcal L_1|^{\frac 12}\big(|\mathcal L_1|^{-\frac 12} \mathcal L_1 |\mathcal L_1|^{-\frac 12}+\ep^2|\mathcal L_1|^{-\frac 12}A(V,w,\ep)|\mathcal L_1|^{-\frac 12}\big)|\mathcal L_1|^{\frac 12}.\]
It is clear that 
\[\big\||\mathcal L_1|^{-\frac 12}\mathcal L_1|\mathcal L_1|^{-\frac 12}\big\|_{L(\Pi_N\Gamma_s,\Pi_N\Gamma_s)}=1.\]
By \eqref{eq6.8} and \eqref{Set}, we have
\[\big\||\mathcal L_1|^{-\frac 12}\big\|_{L(\Pi_N\Gamma_s,\Pi_N\Gamma_{s-\frac{\gamma}{2}})}\leq C\sqrt{\frac{k^{\alpha+1+\gamma}}{j^{l-1}}}\leq C\ep^{-\frac{l-1}{2}}.\]
Therefore, we finish the proof if we can show $\ep^2|\mathcal L_1|^{-\frac 12}A(V,w,\ep)|\mathcal L_1|^{-\frac 12}$ is a small bounded linear operator on $\Pi_N\Gamma_{s-\frac{\gamma}{2}}$. For any $h(x,y)\in \Pi_N\Gamma_{s-\frac \gamma2}$,
\[\begin{aligned}
&|\mathcal L_1|^{-\frac 12}A(V,w,\ep)|\mathcal L_1|^{-\frac 12}h\\
=&|\mathcal L_1|^{-\frac 12}\Pi_N\big(\sum_{k\in\mathbb Z}A_{k}(x,\ep)e^{ikx}\big)
\big(\sum_{2\leq |k|\leq N}(\sum_{j\geq1}\frac{h_{k,j}}{\sqrt{|-k^2+\frac{1}{1+\ep^2}+\ep^2\lambda_j|}}\phi_j(\tau))e^{ikx}\big)\\
=&|\mathcal L_1|^{-\frac 12}\Big[\sum_{|k'|\leq N}\Big(\sum_{2\leq |k|\leq N}\big(\sum_{j\geq1}
\frac{h_{k,j}}{\sqrt{|-k^2+\frac{1}{1+\ep^2}+\ep^2\lambda_j|}} A_{k'-k}(x,\ep)\phi_j(\tau)
\big)\Big)e^{ik'y}\Big],
\end{aligned}\] 
which implies
\[
\begin{aligned}
&\big(|\mathcal L_1|^{-\frac 12}A(V,w,\ep)|\mathcal L_1|^{-\frac 12}h\big)_{k'}\\
=&\sum_{2\leq |k|\leq N}\big(\sum_{j\geq1}
\frac{h_{k,j}}{\sqrt{|-k'^2+\frac{1}{1+\ep^2}+\ep^2\lambda_j|}\sqrt{|-k^2+\frac{1}{1+\ep^2}+\ep^2\lambda_j|}}\\
&\hspace{9.5cm}A_{k'-k}(x,\ep)\phi_j(\tau)
\big).
\end{aligned}\]
Let $m_k:=\displaystyle\min_{j\geq1}{|-k^2+\frac{1}{1+\ep^2}+\ep^2\lambda_j|}$. There exists a unique $j(k)$ such that
\[m_k:=\min_{j\geq1}{|-k^2+\frac{1}{1+\ep^2}+\ep^2\lambda_j|}=|-k^2+\frac{1}{1+\ep^2}+\ep^2\lambda_{j(k)}|\geq \frac{C\ep^{l-1}}{|k|^\gamma},\]
where $C$ is independent of $k,j$ and $\ep$. Note that the norm induced by the inner product defined in \eqref{inner} is equivalent to the standard $H^1$ Sobolev norm. Consequently,
\begin{equation}\label{eq6.15}
\|\big(|\mathcal L_1|^{-\frac 12}A(V,w,\ep)|\mathcal L_1|^{-\frac 12}h\big)_{k'}\|_{H_\tau^1}
\leq \sum_{|k|\geq 2}\|\frac{1}{\sqrt{m_{k'}m_k}}A_{k'-k}\|_{H_\tau^1}\|h_k\|_{H_\tau^1},
\end{equation}
where $h_k=h_k(x)=\displaystyle\sum_{j\geq1}h_{k,j}\phi_j(\tau)$. The definition of $A$ in \eqref{A} implies $A_0=0$. Now we claim that for $k\ne k'$,
\begin{equation}\label{eq6.16}
\frac{1}{\sqrt{m_{k'}m_k}}\leq C\ep^{-(l-1)}|k'-k|^{\gamma}.
\end{equation}
Without loss of generality, we assume $0<k'<k$. For each $\ep\in (0,\ep_0)\backslash R_{V,w}^{2,+\infty}$, there exist $j(k')$ and $j(k)$ such that
\begin{equation}\label{eq6.17}
\ep\in (\frac{k'}{j(k')},\frac{k'}{j(k')-1})\cap (\frac{k}{j(k)},\frac{k}{j(k)-1}).
\end{equation}
If $2k'\leq k$, then (recall that $\gamma=l-(\alpha+2)$)
\[\sqrt{m_{k'}m_k}\geq \sqrt{\frac{k'^{\alpha+1}}{j(k')^{l-1}}\frac{k^{\alpha+1}}{j(k)^{l-1}}} \geq C\frac{\ep^{(l-1)}}{(k'k)^{\frac{\gamma}{2}}}\geq C\frac{\ep^{l-1}}{|k-k'|^\gamma}.\]
For $k'<k<2k'$, we note
\[|k-k'-\ep(j(k)-j(k'))|\geq C\frac{|k-k'|^\alpha}{|j(k)-j(k')|^{l-1}}.\]
From \eqref{eq6.17}, one has
\[0<j(k)-j(k')<\frac{k}{\ep}+1-\frac{k'}{\ep}\leq \frac C\ep|k-k'|.\]
Combining with the above inequality, one has
\[|k-k'-\ep(j(k)-j(k'))|\geq \frac{C|k-k'|^\alpha}{\ep^{-(l-1)}|k-k'|^{l-1}}=\frac{C}{\ep^{-(l-1)}|k-k'|^{\gamma+1}},\]
which implies
\[\max\{|k-\ep j(k)|,|k'-\ep j(k')|\}\geq\frac{C\ep^{l-1}}{2|k-k'|^{\gamma+1}}.\]
If $|k'-\ep j(k')|\geq \frac{C\ep^{l-1}}{2|k-k'|^{\gamma+1}}$, then
\[\sqrt{m_km_{k'}}\geq\sqrt{\frac{C\ep^{l-1}k'}{2|k-k'|^{\gamma+1}}\frac{\ep^{l-1}}{k^\gamma}}\geq
\sqrt{\frac{C\ep^{l-1}k^{1-\gamma}\ep^{l-1}}{4|k-k'|^{\gamma+1}}}\geq\sqrt{\frac{C\ep^{2(l-1)}}{4|k-k'|^{2\gamma}}}=\frac{C\ep^{l-1}}{|k-k'|^\gamma}.
\]
The case for $|k-\ep j(k)|\geq \frac{C\ep^{l-1}}{2|k-k'|^{\gamma+1}}$ is similar. Thus, we obtain \eqref{eq6.16}. Since $A$ is smooth in $U$ and $p$, we use \eqref{eq6.15} to conclude
\[
\|\ep^2\big(|\mathcal L_1|^{-\frac 12}A(V,w,\ep)|\mathcal L_1|^{-\frac 12}h\big)_{k'}\|_{H_\tau^1}
\leq  C\ep^{3-l}\sum_{|k|\geq 2}\||k-k'|^\gamma A_{k'-k}\|_{H_\tau^1}\|h_k\|_{H_\tau^1},\]
which implies (thanks to the fact $s>\frac 12$ and Sobolev embedding)
\[\ep^2\big\||\mathcal L_1|^{-\frac 12}A(V,w,\ep)|\mathcal L_1|^{-\frac 12}h\|_{s-\frac\gamma2}\leq C\ep^{3-l}\|A(V,w,\ep)\|_{s+\frac \gamma 2}\|h\|_{s-\frac \gamma 2}\leq \frac 12 \|h\|_{s-\frac \gamma 2}.\]
Therefore, we finish the proof of \eqref{eq6.11}. Finally, we note $h$ is finite dimensional in $x$. Thus, we can apply \eqref{lp1} to obtain \eqref{eq6.12}.
\end{proof}
\begin{rem}
From the above analysis, one can see there is a trade-off among the loss of spatial ($x$-variable) regularity, temporal ($\tau$-variable) and the bound on $(\Pi_N\mathcal L)^{-1}$. If we choose to sacrifice the temporal regularity, we can bound $(\Pi_N\mathcal L)^{-1}$ like $O(\frac{1}{\ep^{\alpha+1}})$, which is smaller than $O(\frac{1}{\ep^{l-1}})$. However, the solution of the $p$-equation $w(V,\ep)$ has less temporal regularity than $U$, which makes the solution scheme for $U$ incoherent. Since the $V$-equation is finite dimensional in $y$, sacrificing the spatial regularity will not cause any trouble. Even though we get a worse bound $O(\frac{1}{\ep^{l-1}})$ in this case, it still can be controlled by the $\ep^2$ term in the nonlinear part of $\mathcal F$.
\end{rem}

In order to make sure $\Pi_{N_i}\mathcal L(V,w_{i-1},\ep)$ satisfies a similar bound as in \eqref{eq6.12} for each $i$, we need to control $\|w_i-w_{i-1}\|_s$ and remove $\ep$ out of a set that is slightly larger than $R_{U,0}^{2,+\infty}$. We choose $\{N_i\}_{i\geq1}$ to $\ep$ by setting
\begin{equation}\label{eq6.25}
\ep^{-1}<N_1=[\frac 12(\ep^{-1}+\ep^{-2})]<\ep^{-2}\ , \ N_{i+1}=N_i^2,
\end{equation}
where $[\cdot]$ denotes the largest integer that is less than or equal to the number inside the bracket. We will skip writing the dependence of $\Pi_{N}\mathcal L$ on $(U,\ep)$ in the rest of this section, i.e., $\Pi_{N}\mathcal L(w)=\Pi_N\mathcal L(V,w,\ep)$.

\begin{Lemma}\label{le6.3}
Suppose $\|w_i-w_{i-1}\|_s\leq N_{i}^{-\sigma}$ and $\|\p_\ep w_i\|_s\leq\frac 12$, where $0<\gamma+l<\sigma$, there exists a set $R_{p,\alpha,l}\subset(0,\ep_0)$ with zero measure as $\ep_0\to0$ such that for any $\ep\in(0,\ep_0)\backslash R_{p,\alpha,l}$, 
\begin{equation}\label{eq6.26}
\|(\Pi_{N_i}\mathcal L)^{-1}(w_{i-1})\|_{L(\Gamma^i_s,\Gamma^i_{s})}\leq \frac{CN_i^\gamma}{\ep^{l-1}}.
\end{equation}
\end{Lemma}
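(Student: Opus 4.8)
The plan is to reduce \eqref{eq6.26} to the already-proven bound \eqref{eq6.12} of Proposition \ref{prop6.2}, applied with $w=w_{i-1}$ and $N=N_i$, by showing that removing a small set of $\ep$'s makes $\ep \notin R_{V,w_{i-1}}^{2,+\infty}$. The subtlety is that the forbidden set $R_{V,w_{i-1}}^{2,+\infty}$ depends on $w_{i-1}$ (through the eigenvalues $\lambda_j$ of $-\p_{\tau\tau}+\frac1\pi\int_{-\pi}^\pi D_w\tilde g(V,w_{i-1},\ep)\,dx$), and $w_{i-1}$ in turn depends on $\ep$; so one cannot simply excise a fixed set. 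Instead I would define $R_{p,\alpha,l}$ as a union over $i$ and over the relevant mode pairs $(j,k)$ of short intervals in $\ep$, and estimate its total measure.

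First I would quantify how $\lambda_j = \lambda_j(V,w_{i-1},\ep)$ moves. Since $\|w_i-w_{i-1}\|_s \le N_i^{-\sigma}$ and (summing the telescoping series, using $N_{i+1}=N_i^2$) $\|w_{i-1}\|_s$ stays bounded, the perturbation $\frac1\pi\int D_w\tilde g(V,w_{i-1},\ep)\,dx$ is uniformly bounded in, say, $C^0_\tau$; by standard eigenvalue perturbation (min-max for the self-adjoint operator under the inner product \eqref{inner}) the $\lambda_j$ are confined to an $\ep$-independent bounded shift, so $\ep_{k,j}$ obeys \eqref{eq6.8} uniformly. Next, using $\|\p_\ep w_i\|_s \le \tfrac12$ together with smoothness of $\tilde g$ in its arguments, I would show $\p_\ep \lambda_j$ is bounded uniformly in $j$ (the leading $O(j^2)$ part of $\lambda_j$ comes from $-\p_{\tau\tau}$ and is $\ep$-independent; the perturbation and its $\ep$-derivative are bounded). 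This gives $|\p_\ep \ep_{k,j}| \ge c > 0$ on the relevant range, so as $\ep$ varies the resonance condition $|-k^2+\tfrac1{1+\ep^2}+\ep^2\lambda_j|$ small cuts out, for each admissible $(i,j,k)$, an $\ep$-interval of length $\lesssim |k|^\alpha j^{-l}$ — essentially the sets $R_{V,w_{i-1}}^{2,+\infty}$ from \eqref{Set}, thickened by a constant factor to absorb the $\ep$-dependence of $w_{i-1}$ over that interval.

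Then I would set $R_{p,\alpha,l} = \bigcup_i R_{V,w_{i-1}}^{2,N_i}$ (suitably thickened) and bound its measure: for fixed $k$ only finitely many $j$ with $\ep_{k,j}\in(0,\ep_0)$ contribute, namely $j \gtrsim |k|/\ep_0$, so $\sum_j |k|^\alpha j^{-l} \lesssim |k|^\alpha (|k|/\ep_0)^{1-l} = |k|^{\alpha+1-l}\ep_0^{l-1} = |k|^{-1-\gamma}\ep_0^{l-1}$; summing $|k|\ge 2$ converges since $\gamma>0$, and summing over $i$ (the $N_i$ grow, but each contributes the same convergent tail times $\ep_0^{l-1}$, or one simply notes the full union over all $k\ge2$ already contains every $R^{2,N_i}$) gives $|R_{p,\alpha,l}| \lesssim \ep_0^{l-1} \to 0$. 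For $\ep \in (0,\ep_0)\setminus R_{p,\alpha,l}$, Proposition \ref{prop6.2} applies with $w=w_{i-1}$, $N=N_i$ — one must check the hypothesis $w_{i-1}\in\Gamma_{s+\gamma/2}$, which follows since $w_{i-1}\in\Pi_{N_{i-1}}\Gamma_s$ is a trigonometric polynomial in $x$ and \eqref{lp1} upgrades its regularity — yielding \eqref{eq6.26}.

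The main obstacle is the circularity between the resonance set and the iterates: the set of bad $\ep$ is defined in terms of $w_{i-1}(\ep)$, which only exists once $\ep$ is already known to be good. The resolution, which I expect to be the technical heart of the argument, is the uniform Lipschitz/$C^1$ control of $\ep \mapsto \lambda_j(V,w_{i-1}(\ep),\ep)$ coming from the a priori bounds $\|w_i-w_{i-1}\|_s \le N_i^{-\sigma}$ and $\|\p_\ep w_i\|_s \le \tfrac12$: this lets one replace the $\ep$-dependent exclusion intervals by slightly larger $\ep$-independent ones whose union still has measure $O(\ep_0^{l-1})$, so the definition of $R_{p,\alpha,l}$ can be made before the iteration is run — the bounds in the hypothesis are then maintained inductively in the Nash–Moser scheme of the next subsection.
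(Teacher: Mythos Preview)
Your overall strategy --- reduce to Proposition~\ref{prop6.2}, control how the eigenvalues $\lambda_j$ shift with $w$ and with $\ep$, and estimate the measure of the excised set via $\sum_j k^\alpha j^{-l}\lesssim k^{-1-\gamma}\ep_0^{l-1}$ --- matches the paper, and the use of $\|\p_\ep w_i\|_s\le\tfrac12$ to bound the $\ep$-speed of the eigenvalues is exactly right.

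There is, however, a genuine gap at the step ``summing over $i$''. The claim that ``the full union over all $k\ge2$ already contains every $R^{2,N_i}$'' is false as written: each $R_{V,w_{i-1}}^{2,N_i}$ is built from a \emph{different} base point $w_{i-1}$, so these sets are not nested inside a single $R_{V,0}^{2,+\infty}$, nor inside any fixed constant-factor thickening of it. Concretely, the displacement $|\ep_{k,j}(w_{i-1})-\ep_{k,j}(0)|$ is of order $\|w_{i-1}\|_s\cdot \ep_{k,j}/\lambda_j\sim N_1^{-\sigma}\ep/j^2$, while the interval half-width is $k^\alpha/j^l$; for the relevant $j\sim k/\ep$ the ratio (shift)/(width) scales like $N_1^{-\sigma}\ep^{3-l}k^{\gamma}$, which diverges as $k\to\infty$. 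Since $k$ ranges up to $N_i=N_1^{2^{i-1}}$ and $i\to\infty$, the thickening eventually fails; and the alternative ``each $i$ contributes the same convergent tail times $\ep_0^{l-1}$'' is an infinite sum of order-one quantities.

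The paper's resolution is \emph{incremental} rather than cumulative. One does not compare $w_{i-1}$ to $0$; instead one sets
\[
R_i=\Big(\bigcup_{N\le N_i}S_N(w_i)\Big)^c\setminus\Big(\bigcup_{N\le N_{i-1}}S_N(w_{i-1})\Big)^c
\]
and proves $|R_i|\lesssim \ep_0^{l-1}N_i^{-\gamma}$. The gain $N_i^{-\gamma}$ comes exactly from the hypothesis $\|w_i-w_{i-1}\|_s\le N_i^{-\sigma}$ with $\sigma>l+\gamma$: for modes $|k|\le N_{i-1}$ the eigenvalues move only by $O(N_i^{-\sigma})$, producing correspondingly thinner newly-bad intervals, while the genuinely new modes $N_{i-1}<|k|\le N_i$ contribute only the tail of the convergent $k$-series beyond $N_{i-1}$. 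Then $\sum_i N_i^{-\gamma}<\infty$ yields the total $|R_{p,\alpha,l}|=O(\ep_0^{l-1})$. Your ingredients are the right ones, but the measure argument must be organized telescopically in $i$, not by a single thickening of the $w=0$ resonance set.
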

\begin{proof}
We first define the parameter set for any given $w$ ($V$ is fixed) 
\begin{equation}\label{para}
S_N(w)\triangleq \{\ep\Big| \|(\Pi_{N}\mathcal L(w))^{-1}\|_{L(\Pi_N\Gamma_s,\Pi_N\Gamma_s)}\leq \frac{CN^\gamma}{\ep^{l-1}}\},
\end{equation}
where $C$ is the same one as in \eqref{eq6.11}. The above definition can also be understood as none of the eigenvalues of $\Pi_N\mathcal L(w)$ is in the interval $[-\frac{\ep^{l-1}}{CN^\gamma},\frac{\ep^{l-1}}{CN^\gamma}]$.

We will only prove the result for $i=1,2$ and the general result follows similarly. For $i=1$ and $\ep\in(0,\ep_0)\backslash R_{U,0}^{2,+\infty}$, the result is proved in Proposition \ref{prop6.2}. For $i=2$, we need to estimate the measure of $\big(\cup_{N=2}^{N_2} S_N(w_1)\big)^c\backslash \big(\cup_{N=2}^{N_1} S_N(0)\big)^c$. Note that
\[\begin{aligned}
R_1\triangleq&\big(\bigcup_{N=2}^{N_2} S_N(w_1)\big)^c\backslash \big(\bigcup_{N=2}^{N_1} S_N(0)\big)^c\\
\subset &\big(\bigcup_{2\leq N\leq N_1}(S_N^c(w_1)\cap S_N(0)\big)\bigcup \big(\bigcup_{N>N_1}S_N^c(w_1)\big).\end{aligned}
\]
By definition, for each $N\leq N_1$, we have
\[S_N^c(w_1)\cap S_N(0)\subset \{\ep\big| \mathcal L(0)\ \text{has an eigenvalue in}\ [-\frac{\ep^{l-1}}{CN^{\gamma}}-N_1^{-\sigma},\frac{\ep^{l-1}}{CN^{\gamma}}+N_1^{-\sigma}]\}.\]
Since $\ep\in(0,\ep_0)\backslash R_{U,0}^{2,+\infty}$, 
\[\begin{aligned}
S_N^c(w_1)\cap S_N(0)\subset \{\ep\big| \mathcal L(0)\ \text{has an eigenvalue in}\ &[-\frac{\ep^{l-1}}{CN^{\gamma}}-N_1^{-\sigma},-\frac{\ep^{l-1}}{CN^{\gamma}}] \\
&\ \text{or}\ [\frac{\ep^{l-1}}{CN^{\gamma}},\frac{\ep^{l-1}}{CN^{\gamma}}+N_1^{-\sigma}]\}.
\end{aligned}\]
By \eqref{eq6.8} and the assumption $\|\p_\ep w_i\|_s\leq \frac 12$, 
\[\frac{d}{d\ep}(-k^2+\frac{1}{1+\ep^2}+\ep^2\lambda_j)\big|_{\ep=\ep_{N,j}}\leq -|N|j.\] Consequently,  the measure
\[\begin{aligned}
|R_1|\triangleq&|\big(\bigcup_{2\leq N\leq N_1}(S_N^c(w_1)\cap S_N(0)\big)\big(\bigcup_{N>N_1}S_N^c(w_1)\big)|\\
\leq &C'\big(\sum_{2\leq N\leq N_1}\sum_{j=\frac{N}{\ep_0}}^\infty\frac{N_1^{-\sigma}}{Nj}+
\sum_{N\geq N_1}\sum_{j=\frac{N}{\ep_0}}^\infty\frac{\ep^{l-1}}{CN^\gamma}\frac{1}{Nj}\big)\\
\leq & C'\Big(\sum_{2\leq k\leq N_1}\sum_{j=\frac{k}{\ep_0}}^\infty \frac{(\frac {N}{j})^{l-1}}{kj N^\gamma}
+\sum_{N\geq N_1}\sum_{j=\frac{N}{\ep_0}}^\infty\frac{\frac{N}{j}^{l-1}}{CN^\gamma}\frac{1}{Nj}\Big)\\
\leq & C'\big(\sum_{2\leq N\leq N_1}\frac{\ep_0^{l-1}}{NN_1^{\sigma-(l-1)}}+\sum_{N\geq N_1}\frac{\ep_0^{l-1}}{CN^{1+\gamma}}\big)\leq C'\frac{\ep_0^{l-1}}{N_1^\gamma},
\end{aligned}\]
where we use the fact $N_1^{-1}\leq \ep$ and $l+\gamma<\sigma$ to obtain the second inequality.
Now we update the resonance set to be $R_{V,0}^{2,+\infty}\cup R_1$. By iterating the above procedure, one can obtain \eqref{eq6.26} for all $i$ on $(0,\ep_0)\backslash R_{p,\alpha,l}$, where
\[R_{p,\alpha,l}=R_{V,0}^{2,+\infty}\bigcup (\bigcup_{i=1}^\infty R_i)\ , \ R_i=\big(\bigcup_{N=2}^{N_i} S_N(w_i)\big)^c\backslash \big(\bigcup_{N=2}^{N_{i-1}} S_N(w_{i-1})\big)^c.\]
The only thing left is to prove the measure estimate. It is easy to see that the measure of $R_{p,\alpha,l}$ can bounded above by
\[(\sum_{|k|=2}^\infty\sum_{j=\frac{|k|}{\ep_0}}^\infty\frac{2|k|^\alpha}{j^l})(1+\sum_{i=1}^\infty\frac{C'}{N_i^\gamma})\leq C\sum_{|k|=2}^\infty
|k|^\alpha (\frac {k}{\ep_0})^{1-l}\leq C'\ep_0^{l-1}=\ep_0 O(\ep_0^{l-2}),\]
where we use the fact $2+\alpha<l<3$ to obtain the last step. The proof is completed.
\end{proof}

\subsection{The Nash-Moser Iteration}\label{NM}
In this subsection, we apply the Nash-Moser iteration technique to find a convergent sequence $\{w_i\}_{i\geq1}\in \Pi_{N_i}\Gamma_s$ such that each $w_i$ satisfies
\begin{equation}\label{F_i}
\Pi_{N_i}\mathcal F(w_i)=0.
\end{equation}
To simplify the following presentation, we introduce some notations
\begin{eqnarray}\label{notation1}
&& \tilde w_i=w_i-w_{i-1}\ , \ \mathcal L_i=\Pi_{N_i}\mathcal L(w_{i-1})\ , \ \Gamma_{s}^i=\Pi_{ N_i}\Gamma_{s},\\\label{notation2}
&& r_i=(\Pi_{N_i}-\Pi_{N_{i-1}})\mathcal F(w_{i-1})=(\Pi_{N_i}-\Pi_{N_{i-1}})\ep^2\tilde g(w_{i-1}),\\\label{notation3}
&& R_i(\tilde w_i )=\Pi_{N_i}(\mathcal F(w_i)-\mathcal F(w_{i-1})-\mathcal L_i\tilde w_i),
\end{eqnarray}
where $p_0=0$ and $N_0=2$. Here each $r_i$ and $R_i$ satisfy the tame property. More precisely, for $s\leq \bar s$ and $\|w_i\|_s\leq 1$, we have
\begin{eqnarray}\label{tame1}
&& \|r_i\|_{\bar s}+\|D_{V,w}r_i\|_{\bar s}\leq C(\bar s)(1+\|w_{i-1}\|_{\bar s}),\\\label{tame2}
&& \|R_i(\tilde w_i)\|_{\bar s}\leq C(\bar s)(\|w_{i-1}\|_{\bar s}\|\tilde w_i\|_s^2+\|\tilde w_i\|_s\|\tilde w_i\|_{\bar s}).
\end{eqnarray}
We also need for any $s>\frac 12, \bar s\geq 0$ and $u_{1,2}\in \Gamma_s\cap\Gamma_{\bar s},$
\begin{equation}\label{tame3}
\|u_1 u_2\|_{\bar s}\leq C(\bar s)(\|u_1\|_s\|u_2\|_{\bar s}+\|u_1\|_{\bar s}\|u_2\|_s).
\end{equation}
We refer the proof of \eqref{tame1}--\eqref{tame3} to Sections $2$ and $4$ in \cite{BB}. Using notations in \eqref{notation1}, \eqref{notation2} and \eqref{notation3}, one can solve \eqref{F_i} by finding a fixed point $\tilde w_i\in \Gamma_s^i$ for
\begin{equation}\label{fixpoint}
\tilde w_i=-\mathcal L_i^{-1}(r_i+R_i(\tilde w_i)).
\end{equation}
\begin{prop}
There exists $\ep_0\ll1$ and a set $R_{p,\alpha,l}$ such that for every $\ep\in (0,\ep_0)\backslash R_{p,\alpha,l}$, the equation $\mathcal F(V,w,\ep)=0$ has a solution $w(V,\ep)\in \Gamma_1$. Moreover,
\begin{equation}\label{eq6.28}
\|w(V,\ep)\|_1+\|D_Vw(V,\ep)\|_{L(\R^2,\Gamma_1)}+\|\p_\ep w(V,\ep)\|_{L(\R, \Gamma_1)}\leq Ce^{-\frac 14[\frac{c}{\ep}]}.
\end{equation}
\end{prop}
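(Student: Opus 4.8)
The plan is to solve the fixed-point equation \eqref{fixpoint} for each $i$ by contraction, and then pass to the limit. We work with two regularity indices: a low one $s=1$ (as in the statement) and a high one $\bar s$ chosen later, large enough that the tame estimates \eqref{tame1}--\eqref{tame3} compensate the polynomial losses $N_i^\gamma$ coming from Lemma \ref{le6.3}. First I would set up the inductive hypothesis: at step $i$ we assume
\begin{equation*}
\|\tilde w_i\|_s\leq N_i^{-\sigma},\quad \|w_i\|_{\bar s}\leq B_i,\quad \|D_V w_i\|_s+\|\p_\ep w_i\|_s\leq \tfrac12,
\end{equation*}
for a suitable $\sigma$ with $\gamma+l<\sigma$ (so Lemma \ref{le6.3} applies) and a slowly growing $B_i$ (e.g. $B_i\le 1$, or $B_i\le N_i^{\mu}$ for small $\mu$). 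The base case $i=1$ is handled directly: on $\Pi_{N_1}\Gamma_s$ the operator $\mathcal F$ differs from the invertible part $J_\ep-\ep^2\p_{\tau\tau}$ by the $O(\ep^2)$ nonlinearity $\ep^2\tilde g$, and since $\|\bar g\|\le Ce^{-[c/\ep]}$ by \eqref{eq3.13} while $N_1>\ep^{-1}$, a straightforward contraction in a ball of radius $\sim e^{-[c/\ep]/2}$ gives $w_1$.

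For the inductive step I would estimate the map $T_i(\tilde w)=-\mathcal L_i^{-1}(r_i+R_i(\tilde w))$ on the ball $\{\|\tilde w\|_s\le N_i^{-\sigma}\}\cap\{\|\tilde w\|_{\bar s}\le \rho_i\}$. By Lemma \ref{le6.3}, $\|\mathcal L_i^{-1}\|_{L(\Gamma_s^i,\Gamma_s^i)}\le CN_i^\gamma\ep^{-(l-1)}$; the term $r_i$ is small because it is a high-frequency tail of the $\ep^2$-nonlinearity, so by \eqref{lp2} and \eqref{tame1}, $\|r_i\|_s\le \ep^2 N_{i-1}^{-(\bar s-s)}\|\tilde g(w_{i-1})\|_{\bar s}\le \ep^2 N_{i-1}^{-(\bar s-1)}C(\bar s)(1+B_{i-1})$, and $\|R_i(\tilde w)\|_s\le C(\|w_{i-1}\|_s\|\tilde w\|_s^2+\|\tilde w\|_s^2)\le C N_i^{-2\sigma}$ by \eqref{tame2}. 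Feeding these into $T_i$ and using $N_{i+1}=N_i^2$, $N_1>\ep^{-1}$ to absorb the loss $N_i^\gamma\ep^{-(l-1)}\le N_i^{\gamma+l-1}$, one checks $T_i$ maps the ball into itself provided $\bar s$ is chosen with $\bar s-1$ large compared to $\sigma+\gamma+l$; a similar computation with the Lipschitz constant of $R_i$ (again from \eqref{tame2}, of order $\|\tilde w\|_s\le N_i^{-\sigma}$) gives the contraction. One then propagates the high-norm bound $\|w_i\|_{\bar s}\le \|w_{i-1}\|_{\bar s}+\|\tilde w_i\|_{\bar s}$, keeping $B_i$ under control, and differentiates \eqref{fixpoint} in $V$ and $\ep$ to propagate $\|D_V w_i\|_s,\|\p_\ep w_i\|_s\le\tfrac12$ — here the derivative $\p_\ep \mathcal L_i^{-1}$ produces an extra negative power of $\ep$, which is again swallowed by $N_i$; note that this $\ep$-derivative bound is exactly what Lemma \ref{le6.3} needs at the next stage, so the induction on the resonance-set removal and on the iteration run in tandem.

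Convergence: $\sum_i\|\tilde w_i\|_1\le\sum_i N_i^{-\sigma}<\infty$, so $w_i\to w(V,\ep)$ in $\Gamma_1$, and since $\Pi_{N_i}\mathcal F(w_i)=0$ with $N_i\to\infty$ and $\mathcal F$ is continuous on $\Gamma_1$ (using $s=1>\tfrac12$ and the algebra property \eqref{tame3}), the limit satisfies $\mathcal F(V,w,\ep)=0$; the $V$- and $\ep$-derivatives converge likewise. Finally, the exponential bound \eqref{eq6.28}: the whole iteration is \emph{seeded} by $r_1$ and the $w_1$ from the base case, both of which are $O(e^{-[c/\ep]/2})$ because $\mathcal F$ at $w=0$ equals $\ep^2\tilde g(V,0,\ep)$ and, by \eqref{eq3.13}, the averaged part of $\tilde g$ at $w=0$ is exponentially small (the non-averaged part is killed by the normal-form construction). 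Tracking this smallness through the geometric estimates — each $\|\tilde w_i\|_1$ is bounded by a fixed multiple of the previous error times $N_i^{\gamma+l-1}N_i^{-2\sigma}$-type gains — shows $\sum_i\|\tilde w_i\|_1\le C e^{-\frac14[c/\ep]}$, and the same for the derivatives.

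The main obstacle is the genuine quantitative bookkeeping in the inductive step: one must pick $\sigma$ and $\bar s$ so that simultaneously (i) $\gamma+l<\sigma$ for Lemma \ref{le6.3}, (ii) the high-norm tail $N_{i-1}^{-(\bar s-1)}$ beats the low-norm loss $N_i^{\gamma}\ep^{-(l-1)}N_i^{\sigma}$ under $N_{i+1}=N_i^2$, and (iii) the $\ep$-derivative of $\mathcal L_i^{-1}$, which costs an extra factor $\sim N_i^{\gamma}\ep^{-(l-1)}$ relative to a naive estimate, still closes the bound $\|\p_\ep w_i\|_1\le\tfrac12$ — this last point is the delicate one because $\p_\ep$ and $\mathcal L_i^{-1}$ do not commute and one needs the identity $\p_\ep(\mathcal L_i^{-1})=-\mathcal L_i^{-1}(\p_\ep\mathcal L_i)\mathcal L_i^{-1}$ together with a tame bound on $\p_\ep\mathcal L_i$. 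Once the exponents are fixed the individual estimates are routine, being no more than repeated use of \eqref{lp1}, \eqref{lp2}, \eqref{tame1}--\eqref{tame3}, and \eqref{eq6.26}.
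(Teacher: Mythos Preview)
Your proposal is correct and follows essentially the same Nash--Moser scheme as the paper: contraction at each truncation level using the inverse bound from Lemma~\ref{le6.3}, a two-norm (low $s$, high $s+\bar s$) inductive framework with the tame estimates \eqref{tame1}--\eqref{tame3}, and propagation of the $V$- and $\ep$-derivatives alongside. The only notable bookkeeping difference is that the paper carries the exponential factor $e^{-\frac12[c/\ep]}$ explicitly inside the inductive bounds $\|\tilde w_i\|_s\le N_i^{-\sigma}e^{-\frac12[c/\ep]}$ and $\|\tilde w_i\|_{s+\bar s}\le N_i^{2\gamma}e^{-\frac12[c/\ep]}$ (which makes \eqref{eq6.28} immediate), whereas you defer the exponential tracking to the end; both routes close once $\bar s>4\gamma+2\sigma$ is fixed.
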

\begin{proof}
We will solve \eqref{fixpoint} for each $i$ on $\Pi_{N^i}\Gamma_s$ and show that $\sum_{p=1}^\infty\tilde w_i$ converges in $\Gamma_{s}$. Let $C_2:=\|\tilde g\|_{C^2(B_K(\R^2\times \Gamma_{s}),\R)}$, where $K>1$. By \eqref{eq3.13} and \eqref{eq6.6},
\begin{equation}\label{eq6.29}
\|\tilde g(0)\|_{s+\bar s}+\|D_V \tilde g(0)\|_{s+\bar s}\leq C'e^{-[\frac c\ep]},
\end{equation}
where $\bar s>0$ will be chosen later. We claim that for $ i\geq1$, 
\begin{eqnarray}\label{eq6.60}
&&\|\tilde w_i\|_s\leq N_i^{-\sigma}e^{-\frac 12[\frac c\ep]}\ , \ \|\tilde w_i\|_{s+\bar s}\leq N_i^{2\gamma}e^{-\frac 12[\frac c\ep]},\\\label{eq6.61}
&&\|D_V\tilde w_i\|_{L(\R^2,\Gamma_s^i)}\leq N_i^{-(\sigma-\gamma)}e^{-\frac 12[\frac c\ep]}\ ,\ \|D_V\tilde w_i\|_{L(\R^2,\Gamma_{s+\bar s}^i)}\leq N_i^{3\gamma}e^{-\frac 12[\frac c\ep]}.
\end{eqnarray}
where $\sigma>\gamma+l$. With slight abuse of notation, we write the operator norm $\|\cdot\|_{L(\R^2,\Gamma_s^i)}$ simply as $\|\cdot\|_s$ and $\Pi_{N_i}=\Pi_i$. In order to apply Lemma \ref{le6.3}, we also need estimates on $\partial_\ep w_i$, which will be postponed to the end of the proof.

For $i=1$, we look for a solution of 
\begin{equation}\label{eq6.62}
\tilde w_1=-\mathcal L^{-1}_1(r_1+R_1(\tilde w_1)).
\end{equation}
By \eqref{eq6.26}, 
\[\|\mathcal L^{-1}_1\|_{s}=\|(\Pi_{N_1}\mathcal L(0))^{-1}\|_{L(\Pi_{N_1}\Gamma_s,\Pi_{N_1}\Gamma_s)}\leq \frac{CN_1^\gamma}{\ep^{l-1}}.\]
Moreover,
\[\|r_1\|_{\Gamma_s^1}\leq \ep^2C'e^{-[\frac c\ep]}\ , \ \|R_1(\tilde w_1)\|_s\leq \ep^2C_2\|\tilde w_1\|_s^2.\]
One can easily verify for $\ep$ small enough such that
\begin{equation}\label{eq6.63}
N_1^{\sigma+\gamma}e^{-\frac 12[\frac c\ep]}\leq \ep^{-2(\sigma-\gamma)}e^{-\frac 12[\frac c\ep]}<\frac 12,\end{equation}
the right hand side of \eqref{eq6.62} defines a contraction mapping on the ball with radius $N_1^{-\sigma}e^{-\frac12[\frac c\ep]}$ in $\Gamma^1_s$. Thus, \eqref{eq6.62} has a unique solution $\tilde w_1$ such that
\begin{equation}\label{eq6.64}
\|\tilde w_1\|_{s}=\|\tilde w_1\|_{\Gamma_s^1}\leq N_1^{-\sigma}e^{-\frac12[\frac c\ep]}.
\end{equation}
To estimate $\|\tilde w_1\|_{s+\bar s}$, we use \eqref{eq6.62}, \eqref{eq6.29} and \eqref{tame2} to obtain
\begin{equation}\label{eq6.65}
\|\tilde w_1\|_{s+\bar s}\leq \frac{CN_1^\gamma}{\ep^{l-1}}\ep^2(C'e^{-[\frac c\ep]}+C(\bar s)N_1^{-\sigma}e^{-\frac 12[\frac c\ep]}\|\tilde w_1\|_{s+\bar s}),
\end{equation}
which implies
\begin{equation}\label{eq6.66}
\|\tilde w_1\|_{s+\bar s}\leq 2CC'\ep^{3-l}N_1^\gamma e^{-[\frac c\ep]}\leq N_1^{2\gamma}e^{-\frac 12[\frac c\ep]}.
\end{equation}
Let $z_1=D_V\tilde w_1$. Differentiating $\Pi_{1}\mathcal F(V,w_1,\ep)=0$ with respect to $U$ yields
\[
z_1=-(\Pi_1\mathcal L(w_1))^{-1}\Pi_1(\ep^2D_V\tilde g(V,w_1,\ep)).
\]
Since $\|w_1\|_s\leq N_1^{-\sigma}$, $\|(\Pi_{N_1}\mathcal L(w_1))^{-1}\|_s\leq\frac{2CN_1^\gamma}{\ep^{l-1}}$. Consequently,
\[\begin{aligned}
\|z_1\|_{s}\leq &2CN_1^\gamma \ep^{3-l}(\|D_V\tilde g(V,w_1,\ep)-D_V\tilde g(v,0,\ep)\|_s+\|D_V\tilde g(v,0,\ep)\|_s)\\
\leq &2CN_1^\gamma \ep^{3-l}(C_2N_1^{-\sigma}e^{-\frac 12[\frac c\ep]}+C'e^{-[\frac c\ep]})\leq N_1^{-(\sigma-\gamma)} e^{-\frac 12[\frac c\ep]},
\end{aligned}\]
and by \eqref{eq6.66} and \eqref{tame3},
\[\begin{aligned}
\|z_1\|_{s+\bar s}\leq &2CN_1^\gamma \ep^{3-l} \|D_V\tilde g(V,w_1,\ep)\|_{s+\bar s}\\
\leq & C(\bar s)N_1^\gamma \ep^{3-l}(N_1^\gamma e^{-\frac 12[\frac c\ep]}+(1+N_1^\gamma e^{-\frac 12[\frac c\ep]})N_1^{-\sigma} e^{-\frac 12[\frac c\ep]}+C'e^{-[\frac c\ep]})
\leq N_1^{2\gamma}e^{-\frac 12[\frac c\ep]}. \end{aligned}\]
Thus, we finish the proof of \eqref{eq6.60} and \eqref{eq6.61} for $i=1$.

Suppose \eqref{eq6.60} and $\eqref{eq6.61}$ hold for $i=i'$. We have
\begin{eqnarray}\label{eq6.67}
&&\| w_{i'}\|_s \leq 2N_{1}^{-\sigma}e^{-\frac 12[\frac c\ep]}\ , \ \|w_{i'}\|_{s+\bar s}\leq 2N_{i'}^{2\gamma}e^{-\frac 12[\frac c\ep]},\\\label{eq6.68}
&& \|D_V w_{i'}\|_{s}\leq 2N_{1}^{-(\sigma-\gamma)}e^{-\frac 12[\frac c\ep]}\ , \ \|D_V w_{i'}\|_{s+\bar s}\leq 2N_{i'}^{3\gamma}e^{-\frac 12[\frac c\ep]}.
\end{eqnarray}

For $i=i'+1$, by \eqref{tame1}, \eqref{tame3}, \eqref{eq6.29} and \eqref{eq6.67},
\[\begin{aligned}
\|r_{i'+1}\|_s=&\|(\Pi_{N_{i'+1}}-\Pi_{N_{i'}})\ep^2\tilde g(w_{i'})\|_s\\
\leq &\frac{\ep^2}{N_{i'}^{\bar s}}(\|\tilde g(w_{i'})-\tilde g(0)\|_{s+\bar s}+\|\tilde g(0)\|_{s+\bar s})\\
\leq &\frac{\ep^2C(\bar s)}{N_{i'+1}^{\frac{\bar s}{2}}}\big(\|D_w\tilde g\|_s\|w_{i'}\|_{s+\bar s}+\|D_w\tilde g\|_{s+\bar s}\|w_{i'}\|_s+C'e^{-[\frac c\ep]}\big)\\
\leq &  \frac{\ep^2C(\bar s)}{N_{i'+1}^{\frac{\bar s}{2}}}\big(2N_{i'}^{2\gamma}e^{-\frac 12[\frac c\ep]}+C'e^{-[\frac c\ep]}\big)\leq          \ep^2 C'(\bar s)N_{i'+1}^{\gamma-\frac{\bar s}{2}}e^{-\frac 12[\frac c\ep]}.
\end{aligned}\]
We also note $\|R_{i'+1}\|_s$ is quadratic in $\|\tilde w_{i'+1}\|_s$. Thus, we can verify 
$\mathcal L_{i'+1}^{-1}(r_{i'+1}+R_{i'+1}(\tilde w_{i'+1}))$
defines a contraction mapping on the ball with radius $N_{i'+1}^{-\sigma}e^{-\frac 12[\frac c\ep]}$ in $\Gamma_{i'+1}^s$ provided that
\begin{equation}\label{s}
\gamma+(\gamma-\frac{\bar s}{2})<-\sigma\Longleftrightarrow 4\gamma+2\sigma<\bar s. \end{equation}
By using \eqref{tame1} and \eqref{tame2}, we have
\[\begin{aligned}
\|\tilde w_{i'+1}\|_{s+\bar s}\leq &\frac{CN_{i'+1}^\gamma}{\ep^{l-1}}(\|r_{i'+1}\|_{s+\bar s}+\|R_{i'+1}(\tilde w_{i'+1})\|_{s+\bar s})\\
\leq & \frac{CN_{i'+1}^\gamma}{\ep^{l-1}}\ep^2\big[ C'e^{-[\frac c\ep]}+\|D_w\tilde g\|_sN_{i'}^{2\gamma}e^{-\frac 12[\frac c\ep]}
+\|D_w\tilde g\|_{s+\bar s}2N_1^{-\sigma}e^{-\frac 12[\frac c\ep]}\\
&+C(\bar s)\big(N_{i'}^{2\gamma}e^{-\frac 12[\frac c\ep]}(N_{i'+1}^{-\sigma}e^{-\frac 12[\frac c\ep]})^2+(N_{i'+1}^{-\sigma}e^{-\frac 12[\frac c\ep]})\|\tilde w_{i'+1}\|_{s+\bar s}\big)
\big]\\
\leq & CN_{i'+1}^\gamma\ep^{3-l}\big[C'e^{-[\frac c\ep]}+C(\bar s)N_{i'+1}^{\gamma}e^{-\frac 12[\frac c\ep]}+\frac 12\|\tilde w_{i'+1}\|_{s+\bar s}\big],
\end{aligned}\]
from which one can deduce
\begin{equation}\label{eq6.69}
\|\tilde w_{i'+1}\|_{s+\bar s}\leq N_{i'+1}^{2\gamma}e^{-\frac 12[\frac c\ep]}.
\end{equation}

Set $z_i=D_V\tilde w_i$. Differentiating $\Pi_{i'+1}\mathcal F(w_{i'}+\tilde w_{i'+1})=0$ and $\Pi_{i'}\mathcal F(w_{i'})=0$ with respect to $V$ yields
\[\Pi_{i'+1}\mathcal F_V(w_{i'+1})+\mathcal L_{i'+1}(\sum_{i=1}^{i'}z_i+z_{i'+1})=0\ , \ \Pi_{i'}\mathcal F_V(w_{i'})+\mathcal L_{i'+1}(\sum_{i=1}^{i'}z_i)=0.\]
Subtracting the second equality from the first one, we have
\[\begin{aligned}
-\mathcal L_{i'+1}z_{i'+1}=&\Pi_{i'+1}\mathcal F_V(w_{i'+1})-\Pi_{i'}\mathcal F_V(w_{i'})+\mathcal L_{i'+1}(\sum_{i=1}^{i'}z_i)-\mathcal L_{i'}(\sum_{i=1}^{i'}z_i)\\
=&(\Pi_{i'+1}-\Pi_{i'})\mathcal F_V(w_{i'+1})+(\Pi_{i'+1}-\Pi_{i'})\mathcal F_w(w_{i'+1})(\sum _{i=1}^{i'}z_i)\\
&+\Pi_{i'}(\mathcal F_V(w_{i'+1})-\mathcal F_V(w_{i'}))+\Pi_{i'}((\mathcal F_w(w_{i'+1})-\mathcal F_w(w_{i'}))(\sum_{i=1}^{i'}z_i))\\
\triangleq& I+II+III+IV.
\end{aligned}\]
By \eqref{tame1}, \eqref{tame3}, \eqref{eq6.67} and \eqref{eq6.68}, we can estimate terms on the right hand side as
\[\begin{aligned}
&\|I\|_s\leq \frac{1}{N_{i'}^{\bar s}}\|(\Pi_{i'+1}-\Pi_{i'})\mathcal F_V(w_{i'+1})\|_{s+\bar s}\leq \ep^2C(\bar s)N_{i'+1}^{2\gamma-\frac{\bar s}{2}}e^{-\frac 12[\frac c\ep]},\\
&\|II\|_s\leq \frac{\ep^2}{N_{i'}^{\bar s}}\|(\Pi_{i'+1}-\Pi_{i'})\tilde g_w(w_{i'+1})(\sum_{i=1}^{i'}z_i)\|_{s+\bar s}\leq \ep^2C(\bar s)N_{i'+1}^{2\gamma-\frac{\bar s}{2}}e^{-\frac 12[\frac c\ep]},\\
&\|III\|_s+\|IV\|_s\leq C_2\ep^2\|\tilde w_{i'+1}\|_s(1+\|\sum_{i=1}^{i'}z_i\|_s)\leq \ep^2C_2N_{i'+1}^{-\sigma}e^{-\frac 12[\frac c\ep]}.
\end{aligned}\]
Therefore, we have
\[\|z_{i'+1}\|_s\leq \frac{CN_{i'+1}^\gamma}{\ep^{l-1}}(\ep^2C(\bar s)N_{i'+1}^{2\gamma-\frac{\bar s}{2}}e^{-\frac 12[\frac c\ep]}+ \ep^2C_2N_{i'+1}^{-\sigma}e^{-\frac 12[\frac c\ep]}
)\leq N_{i'+1}^{-(\sigma-\gamma)}e^{-\frac 12[\frac c\ep]},\]
where the last inequality holds because of \eqref{s}. By \eqref{eq6.69}, \eqref{eq6.68} and \eqref{tame3}, we have
\[\|III\|_{s+\bar s}+\|IV\|_{s+\bar s}\leq \ep^2 C(\bar s)(N_{i'+1}^{2\gamma}+N_{i'+1}^{\frac{3\gamma}{2}})e^{-\frac 12[\frac c\ep]},\]
which implies
\[\|z_{i'+1}\|_{s+\bar s}\leq \ep^{3-l}CN_{i'+1}^\gamma C'(\bar s)N_{i'+1}^{2\gamma}e^{-\frac 12[\frac c\ep]}\leq N_{i'+1}^{3\gamma}e^{-\frac 12[\frac c\ep]}.\]
The proof of \eqref{eq6.60} and \eqref{eq6.61} is completed. 

Let
\[w:=\lim_{i\to \infty}w_i=\sum_{i'=1}^\infty \tilde w_{i'}\ , \ D_Vw=\lim_{i\to\infty}D_V w_i= \sum_{i'=1}^\infty z_{i'}.\]
Clearly, $w$ satisfies $\mathcal F(V,w,\ep)=0$. Moreover,
\[\|w\|_s\leq \sum_{i=1}^\infty N_i^{-\sigma}e^{-\frac 12[\frac c\ep]}\leq e^{-\frac 12[\frac c\ep]}\ , \ 
\|D_Vw\|_{s}\leq \sum_{i=1}^\infty N_i^{-(\sigma-\gamma)}e^{-\frac 12[\frac c\ep]}\leq e^{-\frac 12[\frac c\ep]}.\]
Finally, we estimate $\p_\ep \tilde w_{i}$ to fulfill the assumption in Lemma \ref{le6.3}. We claim that
\[\|\p_\ep \tilde w_i\|_s\leq \frac{N_i^{\gamma+2-\sigma}}{\ep^{l+1}}e^{-\frac 12[\frac c\ep]}\ , \ 
\|\p_\ep \tilde w_i\|_{s+\bar s}\leq \frac{N_i^{3\gamma+2}}{\ep^{l+1}}e^{-\frac 12[\frac c\ep]}.\]
The verification of the above estimates is similar to the estimate of $D_V\tilde w_i$ by differentiating $\Pi_i\frac{1}{\ep^2}\mathcal F(w_i)=0$ with respect to $\ep$. Therefore, the proof of \eqref{eq6.28} is completed.
\end{proof}

\subsection{Solution of $V$-equation}
With the solution $w(V,\ep)$ obtained in previous subsection, we solve the $V$-equation in this subsection and thus complete the proof of the Main Theorem. We begin with solving \eqref{U1}, which is
\[P_v\big(\mathcal V(P_0+\delta_1(\ep)v^\bot;w(V(\ep),\ep),\ep)-P_0\big)=0.\]
\begin{Lemma}
For any non-resonant $\ep$, there exists a function $U: (0,\ep_0)\backslash R_{p,\alpha,l}\to \R^2$ such that
\begin{equation}\label{eq6.80}
P_v\big(\mathcal V(P_0+\delta_1(\ep)v^\bot;w(V(\ep),\ep),\ep)-P_0\big)=0.\end{equation}
\end{Lemma}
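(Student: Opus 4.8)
The plan is to reduce \eqref{eq6.80} to a scalar equation in the single parameter $\delta_1$ and solve it by a (quantitative) implicit function theorem. Fix $\ep\in(0,\ep_0)\backslash R_{p,\alpha,l}$. For a small curve $V(\cdot)$ as in item (1) of the strategy, the previous subsection produces $w=w(V,\ep)$ solving the second equation of \eqref{eq6.1}, with $\|w(V,\ep)\|_1$, $\|D_Vw(V,\ep)\|$ and $\|\p_\ep w(V,\ep)\|$ all bounded by $Ce^{-\frac14[\frac c\ep]}$, cf. \eqref{eq6.28}. Substituting this $w$ into the first (planar, nonautonomous) equation of \eqref{eq6.1} gives a vector field that depends on the curve $V(\cdot)$ only through $w$, hence with Lipschitz constant $O(e^{-c/\ep})$ in $V(\cdot)$; a contraction argument then gives, for each $\delta_1$ in a fixed small interval, a unique curve $V(\cdot)=V(\cdot;\delta_1,\ep)$ with $V(0)=P_0+\delta_1 v^\bot$ that simultaneously solves the first equation of \eqref{eq6.1} and $w=w(V,\ep)$. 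I then set
\[\mathcal G(\delta_1,\ep)\triangleq P_v\big(\mathcal V(P_0+\delta_1 v^\bot;w(V,\ep),\ep)-P_0\big)=P_v\big(V(p;\delta_1,\ep)-P_0\big),\]
a $C^1$ function of $(\delta_1,\ep)$ on the non-resonant set, and observe that \eqref{eq6.80} is exactly $\mathcal G(\delta_1(\ep),\ep)=0$.

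The next step is the unperturbed analysis, with $\ep=0$ and $w=0$ replaced by the autonomous limit system \eqref{eq2.13a}. Let $\mathcal G_0(\delta_1)\triangleq P_v(\Phi_0^p(P_0+\delta_1 v^\bot)-P_0)$, with $\Phi_0^p$ the time-$p$ flow of \eqref{eq2.13a}. Since $P_\star$ has period $p$, $\Phi_0^p(P_0)=P_0$, so $\mathcal G_0(0)=0$, and $\mathcal G_0'(0)=P_v(Mv^\bot)$, where $M=D\Phi_0^p(P_0)$ is the monodromy matrix of the variational equation of \eqref{eq2.13a} along $P_\star$. Autonomy gives $Mv=v$ (with $v=\dot P_\star(0)$), and the traceless coefficient matrix gives $\det M=1$, so $1$ is a double eigenvalue of $M$. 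Hypothesis (ND) forces its geometric multiplicity to be $1$, so $N\triangleq M-I$ is a nonzero nilpotent with $\mathrm{range}(N)=\ker(N)=\R v$, whence $Nv^\bot=cv$ with $c\ne0$ (else $v^\bot\in\R v$, contradicting $0\ne v^\bot\perp v$). Since $P_vv^\bot=0$, this yields $\mathcal G_0'(0)=P_v(v^\bot)+P_v(Nv^\bot)=cv\ne0$.

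Finally I would compare $\mathcal G(\cdot,\ep)$ to $\mathcal G_0$. The first equation of \eqref{eq6.1} with $w=w(V,\ep)$ inserted differs from \eqref{eq2.13a} only through $\om=\sqrt{1+\ep^2}=1+O(\ep^2)$ in the linear part, the smooth $\ep$-dependence of the nonlinearity ($O(\ep)$), and the substitution of $w$ ($O(\|w\|_1)=O(e^{-c/\ep})$). By Gronwall's inequality these propagate to $\|\mathcal G(\cdot,\ep)-\mathcal G_0\|_{C^1}=O(\ep)$, uniformly on the fixed $\delta_1$-interval. Together with $\mathcal G_0(0)=0$ and $\mathcal G_0'(0)\ne0$, the implicit function theorem then provides, for $\ep_0$ small and every $\ep\in(0,\ep_0)\backslash R_{p,\alpha,l}$, a unique small zero $\delta_1(\ep)$ of $\mathcal G(\cdot,\ep)$, with $|\delta_1(\ep)|=O(\ep)$; putting $U(\ep)=P_0+\delta_1(\ep)v^\bot$ proves the Lemma (the resonance set enters only as the domain of $\ep\mapsto w(V,\ep)$).

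The step I expect to be the main obstacle is the first one: closing the self-consistent loop ``curve $\mapsto w(V,\ep)\mapsto$ planar flow $\mapsto$ curve'' so that the problem genuinely collapses to the scalar equation $\mathcal G=0$, and doing so with enough regularity ($C^1$ in $(\delta_1,\ep)$) to run the implicit function theorem. This rests entirely on the exponentially small bounds \eqref{eq6.28}: they make the $w$-feedback a genuine contraction and keep the effective perturbation of the planar problem $O(\ep)$ rather than $O(1)$. Once the reduction is in place, the monodromy/transversality computation from (ND) and the implicit function theorem are the standard continuation-of-periodic-orbits argument; the only extra care needed is to obtain the $\ep$-closeness in $C^1$ rather than merely $C^0$, so that the root $\delta_1(\ep)$ is well defined and unique.
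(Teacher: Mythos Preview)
Your proposal is correct and follows essentially the same route as the paper: define the scalar function $\mathcal G(\delta_1,\ep)$ (the paper calls it $\mathcal T$), check that it vanishes at $(0,0)$, show $\partial_{\delta_1}\mathcal G(0,0)=P_v(Mv^\bot)\ne0$ from the non-degeneracy hypothesis (ND), and apply the implicit function theorem together with the exponential smallness \eqref{eq6.28}. The paper's proof is three lines and simply asserts $P_v(D_1\mathcal V(P_0;0,0)v^\bot)\ne0$ ``by non-degeneracy of $P_\star$''; your monodromy computation ($\det M=1$, $Mv=v$, (ND) $\Rightarrow$ $M-I$ is rank-one nilpotent with range $\R v$, hence $Nv^\bot=cv$ with $c\ne0$) is exactly the content that justifies that assertion, and your explicit handling of the self-consistency loop $V\mapsto w(V,\ep)\mapsto V$ via a contraction is a point the paper leaves implicit.
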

\begin{proof}
Let $\mathcal T(\delta_1,\ep)\triangleq P_v\big(\mathcal V(P_0+\delta_1v^\bot;w(V(\delta_1),\ep),\ep)-P_0\big)$. Note that 
$\mathcal T(0,0)=0$ and by \eqref{eq6.28} and non-degeneracy of $P_\star$,
\[ D_{\delta_1}\mathcal T(0,0)=P_v(D_1\mathcal V(P_0;0,0)v^\bot)\ne0.\]
By \eqref{eq6.28} and the implicit function theorem, we complete the proof.
\end{proof}

We now conclude $V(\ep)$ obtained in \eqref{eq6.80} satisfies the first equation of \eqref{eq6.1} by showing 
\begin{equation}\label{eq6.82}
(I-P_v)\big(\mathcal V(P_0+\delta_1(\ep)v^\bot;w(V(\ep),\ep),\ep)-P_0\big)=\delta_1(\ep).
\end{equation}
Let $d=(I-P_v)\big(\mathcal V(P_0+\delta_1(\ep)v^\bot;w(V(\ep),\ep),\ep)-P_0\big)-\delta_1(\ep)$. Recall the definition of $H_\star$ in \eqref{eq2.13} and $v^\bot=DH_\star(P_0)$. If $d\ne0$, then
\[\big|H_\star(\mathcal V(P_0+\delta_1(\ep)v^\bot;w(V(\ep),\ep),\ep))-H_\star(P_0+\delta_1(\ep)v^\bot)\big|\geq\frac{d}{2}\|v^\bot\|^2\ne0.\]
By \eqref{eq2.7} and \eqref{eq2.13}, we have
\[\big|H(v,v_\tau,w,w_\tau,\ep)-H_\star(v,v_\tau)\big|=O(\ep^2+|w_\tau|^2+\frac{1}{\ep^2}|w|^2).\]
Since $w$ is exponentially small in $\ep$ (by \eqref{eq6.28}), we can replace \eqref{eq6.82} by
\[H\Big(\mathcal V(P_0+\delta_1(\ep)v^\bot,w(\ep),\ep),w(\ep)(p),w_\tau(\ep)(p)\Big)=H\Big(P_0+\delta_1(\ep)v^\bot,w(\ep)(0),w_\tau(\ep)(0)\Big).\]
Since $H$ is invariant under \eqref{eq6.1}, we conclude $V(\ep)$ is $p$-periodic. Finally, we recall \eqref{eq6.1} is obtained from \eqref{eq1.1} through the spatial dynamics formulation (by swapping $x$ and $t$). Together with rescaling in \eqref{eq2.2}, the proof of the Main Theorem is completed.

\section*{Acknowledgement}
The author would like to thank the anonymous referee for providing useful comments to improve the readability and presentation in this manuscript.

\end{document}